\documentclass[a4paper,reqno]{amsart}
      \usepackage{etex}
      \usepackage[OT2, T1]{fontenc}
      \usepackage{url}
      \usepackage{amsmath}
      \usepackage{lmodern}
      \usepackage{array}
      \usepackage{graphicx}
      \usepackage{amsfonts}
      \usepackage{amssymb}
      \usepackage{extpfeil}
\usepackage{soul}
\usepackage{xargs}
      \usepackage{xcolor}
      \definecolor{royalblue}{RGB}{64, 106, 212}
      \definecolor{link}{RGB}{11,0,128}
      \definecolor{olivegreen}{RGB}{128, 128, 0}
      \usepackage{aliascnt}
      \usepackage[colorlinks=true, citecolor=royalblue,linkcolor=olivegreen,urlcolor=link]{hyperref}
      \usepackage{amsthm}
      \usepackage{esint}
      \usepackage{epic}
      \usepackage{rotating}

      \usepackage{calligra}
      \usepackage{xkeyval}

      \usepackage{exscale, relsize}
      \usepackage{stackengine}
      \usepackage{scalerel}
      \usepackage{setspace}
      \usepackage{paralist}
      \usepackage{colonequals}		
      \usepackage{mathabx}		
      \usepackage[left]{lineno}
      \usepackage{amscd}
      \usepackage[all,cmtip]{xy}	
      \usepackage{sseq}			
      \usepackage{verbatim}
      \usepackage{parskip}
      \usepackage{microtype}
      \usepackage{ragged2e}
            \usepackage[in]{fullpage}
      \usepackage{graphicx}
      \usepackage{mathrsfs} 			
      \usepackage{bm} 				
      \usepackage{cleveref}
      \usepackage{enumitem}
      \usepackage{moreenum}			
      \usepackage{blindtext}
      \usepackage{tikz-cd}
      \usepackage{tikz}
      \usepackage{subfiles}
      \usetikzlibrary{shapes.geometric}
      \usetikzlibrary{automata, bending,matrix,arrows,decorations.pathmorphing,backgrounds,positioning,fit,petri,arrows.meta}
      \usepackage{textcomp}
      \usepackage{indentfirst}
      \usetikzlibrary{arrows}
      \usepackage{enumitem}
      \tikzset{commutative diagrams/.cd,arrow style=tikz,diagrams={>=latex'}}
      \usepackage{filecontents}
      \usepackage[alphabetic,lite]{amsrefs} 	
      \usepackage{stmaryrd}	
      \usepackage{subfiles}
      \usepackage{ragged2e}
      \usetikzlibrary{arrows}
      \usepackage{stackengine}

\usepackage[colorinlistoftodos,prependcaption,textsize=tiny]{todonotes}
\newcommandx{\unsure}[2][1=]{\todo[linecolor=red,backgroundcolor=red!25,bordercolor=red,#1]{#2}}
\newcommandx{\change}[2][1=]{\todo[linecolor=blue,backgroundcolor=blue!25,bordercolor=blue,#1]{#2}}
\newcommandx{\info}[2][1=]{\todo[linecolor=OliveGreen,backgroundcolor=OliveGreen!25,bordercolor=OliveGreen,#1]{#2}}
\newcommandx{\improvement}[2][1=]{\todo[linecolor=Plum,backgroundcolor=Plum!25,bordercolor=Plum,#1]{#2}}
\newcommandx{\thiswillnotshow}[2][1=]{\todo[disable,#1]{#2}}

\usepackage{marginnote}
\newcommand{\mytodo}[2][]{{%
 \let\marginpar\marginnote
 \reversemarginpar
 \renewcommand{\baselinestretch}{0.8}%
 \todo[#1]{#2}}}

         \newcommand{\bA}{\mathbb{A}}

         \newcommand{\bG}{\mathbb{G}}

         \newcommand{\bP}{\mathbb{P}}

         \newcommand{\cE}{\mathcal{E}}

         \newcommand{\cO}{\mathcal{O}}

         \newcommand{\fm}{\mathfrak{m}}

         \newcommand{\sG}{\mathscr{G}}

         \newcommand{\ra}{\rightarrow}

         \newcommand{\pr}{^{\prime}}

         \renewcommand{\b}{\textbf}

         \providecommand{\fps}[1]{[\![#1]\!]}
         \providecommand{\lps}[1]{(\!(#1)\!)}
         \providecommand{\SP}[1]{\cite[\href{https://stacks.math.columbia.edu/tag/#1}{#1}]{SP}}

         \providecommand{\fps}[1]{\llbracket#1\rrbracket}
         \providecommand{\lps}[1]{(\!(#1)\!)}

\newextarrow{\xbigtoto}{{15}{15}{15}{12}}
   {\bigRelbar\bigRelbar{\bigtwoarrowsleft\rightarrow\rightarrow}}
         
         \DeclareMathOperator{\Spec}{Spec}		                       
         \DeclareMathOperator{\rad}{rad}			                       
         \DeclareMathOperator{\Frac}{Frac}		                       
         \DeclareMathOperator{\GL}{GL}		                                                  
         \DeclareMathOperator{\rk}{rk}		                                                  
         \DeclareMathOperator{\codim}{codim}		                                                  
\makeatletter
\newcommand{\csub[1]}{\centering\subsection{\text{#1} \nopunct} \hfill \justify}
\makeatother
         \newcommand{\ba}{\begin{aligned}}
         \newcommand{\ea}{\end{aligned}}
         \newcommand{\be}{\begin{equation}}
         \newcommand{\ee}{\end{equation}}
         \newcommand{\pf}{\begin{proof}}
         \newcommand{\bpf}{\begin{proof}}
         \newcommand{\epf}{\end{proof}}
         \newcommand{\bsol}{\begin{solution}}
         \newcommand{\esol}{\end{solution}}
         \newcommand{\bthm}{\begin{thm}}
         \newcommand{\ethm}{\end{thm}}

         \newcommand{\bprop}{\begin{prop}}
         \newcommand{\eprop}{\end{prop}}
         \newcommand{\bcor}{\begin{cor}}
         \newcommand{\ecor}{\end{cor}}

         \newcommand{\brem}{\begin{rem}}
         \newcommand{\erem}{\end{rem}}

         \newcommand{\brems}{\begin{rems} \hfill \begin{enumerate}[label=\b{\thenumberingbase.},ref=\thenumberingbase]}
         
         \newcommand{\erems}{\end{enumerate} \end{rems}}
         \newcommand{\begs}{\begin{egs} \hfill \begin{enumerate}[label=\b{\thenumberingbase.},ref=\thenumberingbase]}
         
         \newcommand{\eegs}{\end{enumerate} \end{egs}}
         \newcommand{\eremstweak}{\end{enumerate} \end{rems-tweak}}
         \newcommand{\eremst}{\end{enumerate} \end{rems-tweak}}
         \newcommand{\blem}{\begin{lemma}}
         \newcommand{\elem}{\end{lemma}}

         \newcommand{\bconj}{\begin{conj}}
         \newcommand{\econj}{\end{conj}}
         \newcommand{\bprob}{\begin{Problem}}
         \newcommand{\eprob}{\end{Problem}}

         \newcommand{\bq}{\begin{Q}}
         \newcommand{\eq}{\end{Q}}
         \newcommand{\benum}{\begin{enumerate}[label={{\upshape(\alph*)}}]}
         \newcommand{\benuma}{\begin{enumerate}[label={{\upshape(\arabic*)}}]}
         \newcommand{\benumr}{\begin{enumerate}[label={{\upshape(\roman*)}}]}
         \newcommand{\eenum}{\end{enumerate}}
         \newcommand{\bc}{}
         \newcommand{\bd}{\begin{defn}}
         \newcommand{\ed}{\end{defn}}
         \newcommand{\bque}{\begin{que}}
         \newcommand{\eque}{\end{que}}
         \newcommand{\bfct}{\begin{fact}}
         \newcommand{\efct}{\end{fact}}

         \newcommand{\beg}{\begin{eg}}
         \newcommand{\eeg}{\end{eg}}

         \newcommand{\bcl}{\begin{claim}}
         \newcommand{\ecl}{\end{claim}}

\tikzset{
    labl/.style={anchor=south, rotate=90, inner sep=.5mm}
}

\newaliascnt{numberingbase}{subsection}
\numberwithin{equation}{numberingbase}

\newcommand{\statementskip}{9pt plus 2pt minus 1pt}
\newcommand{\subsectionbeforeskip}{9pt plus 2pt minus 1pt}
\newcommand{\subsectionafterskip}{4pt plus 1pt minus .5pt}
\makeatletter
\def\subsection{\@startsection{subsection}{2}%
  \z@{\subsectionbeforeskip}{\subsectionafterskip}%
  {\normalfont\bfseries}}
\def\subsubsection{\@startsection{subsubsection}{3}%
  \z@{7pt plus 1.5pt minus 1pt}{3pt plus 1pt minus .5pt}%
  {\normalfont\itshape}}
\makeatother

\newtheoremstyle{thms}{\statementskip}{\statementskip}{\itshape}{}{\bfseries}{.}{ }{}
\theoremstyle{thms}
\newaliascnt{conj}{numberingbase}
\newtheorem{conj}[conj]{Conjecture}
\aliascntresetthe{conj}
\newaliascnt{corollary}{numberingbase}
\newtheorem{corollary}[corollary]{Corollary}
\aliascntresetthe{corollary}
\newaliascnt{cor}{numberingbase}
\newtheorem{cor}[cor]{Corollary}
\aliascntresetthe{cor}
\newaliascnt{lemma}{numberingbase}
\newtheorem{lemma}[lemma]{Lemma}
\aliascntresetthe{lemma}
\newaliascnt{sublemma}{equation}

\aliascntresetthe{sublemma}
\newaliascnt{prop}{numberingbase}
\newtheorem{prop}[prop]{Proposition}
\aliascntresetthe{prop}
\newaliascnt{proposition}{numberingbase}
\newtheorem{proposition}[proposition]{Proposition}
\aliascntresetthe{proposition}
\newaliascnt{Q}{numberingbase}
\newtheorem{Q}[Q]{Question}
\aliascntresetthe{Q}
\newaliascnt{thm}{numberingbase}
\newtheorem{thm}[thm]{Theorem}
\aliascntresetthe{thm}
\newaliascnt{theorem}{numberingbase}
\newtheorem{theorem}[theorem]{Theorem}
\aliascntresetthe{theorem}
\newaliascnt{variant}{numberingbase}

\aliascntresetthe{variant}

\newtheoremstyle{claims}{\statementskip}{\statementskip}{}{}{\itshape}{.}{ }{}
\theoremstyle{claims}
\newaliascnt{claim}{equation}
\newtheorem{claim}[claim]{Claim}
\aliascntresetthe{claim}
\newaliascnt{cl-tweak}{subsubsection}
\newtheorem{cl-tweak}[cl-tweak]{Claim}
\aliascntresetthe{cl-tweak}
\Crefname{cl-tweak}{Claim}{Claims}

\newtheoremstyle{defs}{\statementskip}{\statementskip}{}{}{\bfseries}{.}{ }{}
\theoremstyle{defs}
\newaliascnt{defn}{numberingbase}
\newtheorem{defn}[defn]{Definition}
\aliascntresetthe{defn}
\newaliascnt{definition}{numberingbase}

\aliascntresetthe{definition}
\newaliascnt{eg}{numberingbase}
\newtheorem{eg}[eg]{Example}
\aliascntresetthe{eg}
\newaliascnt{example}{numberingbase}

\aliascntresetthe{example}
\newtheorem*{egs}{Examples}
\newaliascnt{rem}{numberingbase}
\newtheorem{rem}[rem]{Remark}
\aliascntresetthe{rem}
\newaliascnt{remark}{numberingbase}
\newtheorem{remark}[remark]{Remark}
\aliascntresetthe{remark}
\newtheorem*{rems}{Remarks}

\Crefname{claim}{Claim}{Claims}
\Crefname{sublemma}{Lemma}{Lemmas}
\Crefname{conj}{Conjecture}{Conjectures}
\Crefname{cor}{Corollary}{Corollaries}
\Crefname{defn}{Definition}{Definitions}
\Crefname{eg}{Example}{Examples}
\Crefname{prop}{Proposition}{Propositions}
\Crefname{Q}{Question}{Questions}
\Crefname{rem}{Remark}{Remarks}
\Crefname{thm}{Theorem}{Theorems}
\Crefname{variant}{Variant}{Variants}
\crefname{claim}{claim}{claims}
\crefname{sublemma}{lemma}{lemmas}
\crefname{conj}{conjecture}{conjectures}
\crefname{cor}{corollary}{corollaries}
\crefname{corollary}{corollary}{corollaries}
\crefname{defn}{definition}{definitions}
\crefname{definition}{definition}{definitions}
\crefname{eg}{example}{examples}
\crefname{example}{example}{examples}
\crefname{lemma}{lemma}{lemmas}
\crefname{prop}{proposition}{propositions}
\crefname{proposition}{proposition}{propositions}
\crefname{Q}{question}{questions}
\crefname{rem}{remark}{remarks}
\crefname{remark}{remark}{remarks}
\crefname{thm}{theorem}{theorems}
\crefname{theorem}{theorem}{theorems}
\crefname{variant}{variant}{variants}
\Crefname{corollary}{Corollary}{Corollaries}
\Crefname{definition}{Definition}{Definitions}
\Crefname{example}{Example}{Examples}
\Crefname{lemma}{Lemma}{Lemmas}
\Crefname{proposition}{Proposition}{Propositions}
\Crefname{remark}{Remark}{Remarks}
\Crefname{theorem}{Theorem}{Theorems}

\theoremstyle{thms}
\newaliascnt{thm-tweak}{subsection}
\newtheorem{thm-tweak}[thm-tweak]{Theorem}
\aliascntresetthe{thm-tweak}
\crefname{thm-tweak}{theorem}{theorems}
\Crefname{thm-tweak}{Theorem}{Theorems}
\newaliascnt{lemma-tweak}{subsection}
\newtheorem{lemma-tweak}[lemma-tweak]{Lemma}
\aliascntresetthe{lemma-tweak}
\crefname{lemma-tweak}{lemma}{lemmas}
\Crefname{lemma-tweak}{Lemma}{Lemmas}
\newaliascnt{cor-tweak}{subsection}
\newtheorem{cor-tweak}[cor-tweak]{Corollary}
\aliascntresetthe{cor-tweak}
\crefname{cor-tweak}{corollary}{corollaries}
\Crefname{cor-tweak}{Corollary}{Corollaries}
\newaliascnt{prop-tweak}{subsection}
\newtheorem{prop-tweak}[prop-tweak]{Proposition}
\aliascntresetthe{prop-tweak}
\crefname{prop-tweak}{proposition}{propositions}
\Crefname{prop-tweak}{Proposition}{Propositions}
\newaliascnt{conj-tweak}{subsection}
\newtheorem{conj-tweak}[conj-tweak]{Conjecture}
\aliascntresetthe{conj-tweak}
\crefname{conj-tweak}{conjecture}{conjectures}
\Crefname{conj-tweak}{Conjecture}{Conjectures}

\theoremstyle{defs}
\newaliascnt{defn-tweak}{subsection}
\newtheorem{defn-tweak}[defn-tweak]{Definition}
\aliascntresetthe{defn-tweak}
\crefname{defn-tweak}{definition}{definitions}
\Crefname{defn-tweak}{Definition}{Definitions}
\newaliascnt{eg-tweak}{subsection}
\newtheorem{eg-tweak}[eg-tweak]{Example}
\aliascntresetthe{eg-tweak}
\crefname{eg-tweak}{example}{examples}
\Crefname{eg-tweak}{Example}{Examples}
\newtheorem*{rems-tweak}{Remarks}
\newaliascnt{rem-tweak}{subsection}
\newtheorem{rem-tweak}[rem-tweak]{Remark}
\aliascntresetthe{rem-tweak}
\crefname{rem-tweak}{remark}{remarks}
\Crefname{rem-tweak}{Remark}{Remarks}

\newtheoremstyle{subsection-tweak}
   {\subsectionbeforeskip}
   {\subsectionafterskip}%
   {}
   {}%
   {\bfseries}
   {}%
   {\newline}
   {\thmnumber{\@{#1}{}\@{#2}.}%
    \thmnote{~{\bfseries#3.}}}

\makeatletter
\newcommand{\ppafterheading}{%
  \@ifnextchar\label{\ppafterheadinglabel}{\ignorespaces}%
}
\def\ppafterheadinglabel\label#1{\label{#1}\ignorespaces}
\makeatother

\newenvironment{pp}[1][]{%
  \par\addvspace{\subsectionbeforeskip}%
  \refstepcounter{subsection}%
  \noindent\textbf{\thesubsection.\if\relax\detokenize{#1}\relax\else\ #1.\fi}%
  \par\nobreak\vspace{\subsectionafterskip}\noindent\ppafterheading
}{%
  \par\addvspace{\subsectionafterskip}%
}
\newcommand{\bpp}{\begin{pp}}
\newcommand{\epp}{\end{pp}}
\newaliascnt{pp-t}{subsubsection}
\newtheorem{pp-t}[pp-t]{}
\aliascntresetthe{pp-t}
\crefname{pp-t}{section}{sections}
\Crefname{pp-t}{Section}{Sections}

\theoremstyle{subsection-tweak}
\newaliascnt{conventions}{subsection}

\aliascntresetthe{conventions}
\crefname{conventions}{section}{sections}
\Crefname{conventions}{Section}{Sections}

\theoremstyle{subsection-tweak}
\newtheorem{pp-tweak}{}
\crefname{pp-tweak}{section}{sections}
\Crefname{pp-tweak}{Section}{Sections}

      \makeatletter
      \def\@tocline#1#2#3#4#5#6#7{
          \begingroup
          \@ifempty{#4}{}{}

          \parindent\z@ \leftskip#3\relax \advance\leftskip\@tempdima\relax
          #5\hskip-\@tempdima
            \ifcase #1
             \or\or \hskip 2em \or \hskip 1em \else \hskip 3em \fi%
            #6\nobreak\relax
          \dotfill\hbox to\@pnumwidth{\@tocpagenum{#7}}\par
          \nobreak
          \endgroup
        }
       \def\l@section{\@tocline{1}{0pt}{1pc}{}{}}

      \renewcommand{\tocsection}[3]{%
        \indentlabel{\@ifnotempty{#2}{\makebox[1.3em][l]{%
          \ignorespaces#1 \bfseries{#2}.\hfill}}}\bfseries{#3}
          \vspace{-3.5pt}}

      \renewcommand{\tocsubsection}[3]{%
        \indentlabel{\@ifnotempty{#2}{\hspace*{-0.5em}\makebox[2.1em][l]{%
          \ignorespaces#1#2.\hfill}}}#3
          \vspace{-4.5pt}}

\makeatother

\makeatletter 
\newcommand\appendix@section[1]{%
  \refstepcounter{section}%
  \orig@section*{Appendix \@Alph\c@section. #1}%
}
\let\orig@section\section
\g@addto@macro\appendix{\let\section\appendix@section}
\makeatother

      \DeclareMathVersion{normal2}

\def\UTFviii@defined#1{%
  \ifx#1\relax
      \PackageError{inputenc}{Unicode\space char\space\expandafter
                              \UTFviii@splitcsname\string#1\relax
                              \MessageBreak
                              not\space set\space up\space
                              for\space use\space with\space LaTeX}\@eha
  \else\expandafter
    #1%
  \fi
}

\makeatletter
\def\UTFviii@defined#1{%
  \ifx#1\relax
      ?%
  \else\expandafter
    #1%
  \fi
}

\makeatother

\providecommand{\A}{\mathbb A}
\providecommand{\Pj}{\mathbb P}
\providecommand{\Gm}{{\mathbb G_m}}
\providecommand{\OO}{\mathcal O}
\providecommand{\Max}{\operatorname{Max}}
\providecommand{\Hone}{H^1}
\DeclareMathDelimiter{\llbracket}{\mathopen}{stmry}{"4A}{stmry}{"71}
\DeclareMathDelimiter{\rrbracket}{\mathclose}{stmry}{"4B}{stmry}{"79}

\author{Fei Liu}
\address{Department of Mathematics, Run Run Shaw Building, The University of Hong Kong, Hong Kong}
\email{liufei54@pku.edu.cn}
\date{September 10, 2026}
\subjclass[2020]{Primary 14L15; Secondary 14F20, 20G10.}
\keywords{Grothendieck--Serre conjecture, reductive group scheme, torsor,
unramified regular local ring, projective line}

\title{The unramified Grothendieck--Serre conjecture}

\begin{document}
\begin{abstract}
We prove that a generically trivial torsor under a reductive group
scheme over an unramified regular local ring is trivial. 
\end{abstract}
\maketitle
\vspace{-25pt}
\setcounter{tocdepth}{1}
\begingroup
\hypersetup{linkcolor=black}
\tableofcontents
\endgroup
\medskip

\section{Introduction}\label{sec:introduction}

This article concerns the following conjecture of Grothendieck and
Serre \cite{Ser58}*{page~31, remarque},
\cite{Gro58}*{pages~26--27, remarques~3},
\cite{Gro68}*{remarques~1.11~a)} about the triviality of torsors.

\begin{conj}[Grothendieck--Serre]\label{conj:gs}
For a reductive group scheme $G$ over a regular local ring
$R$, no nontrivial $G$-torsor over $R$ trivializes over
its fraction field, that is,
\[
 \ker\bigl(\Hone(R,G)\ra \Hone(\Frac R,G)\bigr)=\{*\}.
\]
\end{conj}

The equicharacteristic case was proved by Fedorov--Panin \cite{FP15}, while the mixed-characteristic case remains widely open, except for several unramified cases, namely, those of constant groups \cite{GL23} and totally isotropic groups \cite{CF23}.
In this article, we settle \Cref{conj:gs} for all unramified regular local rings.

\begin{theorem}\label{thm:main}
For a Noetherian semilocal ring $R$ that is flat and geometrically
regular\footnote{Equivalently, by Popescu's theorem \cite{SP}*{Theorem 07GC}, $R$ is ind-smooth over $\cO$.} over a Dedekind ring $\OO$, and a reductive $R$-group
scheme $G$, every generically trivial $G$-torsor is trivial; equivalently,
\begin{equation}
 \Hone(R,G)\hookrightarrow \Hone(\Frac R,G) \tag{$\bigstar_{\!G}$}.
\end{equation}
\end{theorem}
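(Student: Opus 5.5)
We reduce the statement to a local problem on a relative projective line over a smaller base, then use the geometry of $G$-torsors on $\Pj^1$ to conclude.

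\textbf{Reductions.} By Popescu's theorem, $R$ is a filtered colimit of smooth $\OO$-algebras. Torsors and the group $G$ descend to a finite stage, and generic triviality persists after localizing. So we may assume $R$ is the semilocal ring of a smooth $\OO$-scheme $X$ at finitely many points, $G$ extends to a reductive $X$-group, and $E$ is a $G$-torsor over $X$ that is trivial away from a closed subscheme $Z\subset X$ of positive codimension. If all the points lie over the generic point of $\OO$, or if $\OO$ is a field, the equicharacteristic theorem of Fedorov--Panin \cite{FP15} applies. So the real case is mixed characteristic, localized at a maximal ideal of $\OO$, with $\OO$ a DVR with uniformizer $\pi$. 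We also reduce to $G$ with simply connected derived group via $z$-extensions, since the torus case is known: $H^1$ of tori over regular semilocal rings injects generically, by Colliot-Thélène--Sansuc.

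\textbf{Geometric presentation.} Next, we want an unramified variant of Gabber's presentation lemma. After shrinking $X$ around the points, we look for a smooth affine $\OO$-scheme $S$ of relative dimension $\dim X-1$ and an étale map $X\to \A^1_S$ such that $Z$ is finite over $S$ and maps isomorphically onto its image. Standard Nisnevich patching then transports $E$ to a $G$-torsor on $\A^1_{S'}$, with $S'=\Spec$ of the semilocalization of $S$. This torsor is trivial away from a closed subscheme finite over $S'$, and it extends to $\Pj^1_{S'}$ by gluing with the trivial torsor near $\infty$. This requires $G$ to be "constant along the fibres", i.e.\ pulled back from $S'$. Achieving this, by replacing $G$ with an inner form via the equating-groups technique of Fedorov--Panin and Česnavičius, is where unramifiedness is used: the fibre over $\pi=0$ must be smooth, so that the presentation can be chosen compatibly on both fibres, with finite fields handled by a separate Bertini argument.

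\textbf{Torsors on $\Pj^1$.} Finally we must show that a $G$-torsor $\mathcal E$ on $\Pj^1_{S'}$ that is trivial along the section $\infty$ (and on a neighbourhood of it) is trivial along the zero section. The plan is to use the Horrocks-type principle: one shows $\mathcal E$ is trivial on the closed fibres $\Pj^1_{k}$ over the closed points of $S'$. Then, by deformation theory of $G$-torsors on $\Pj^1$ with trivial special fibres, $\mathcal E$ is pulled back from $S'$, hence trivial, since it is trivial at $\infty$. On a closed fibre, $\mathcal E$ is given by a cocharacter via Grothendieck--Harder. The hard input is to show this cocharacter is trivial, using that $\mathcal E$ is trivial away from a finite subscheme and that $G$ has no anisotropic obstruction. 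This is the main obstacle. When $G$ is totally isotropic, the method of \cite{CF23} handles it by elementary subgroups. For anisotropic $G$ I would first try to pass to a finite étale extension of $S'$ of degree prime to any given prime over which $G$ becomes quasi-split. I would then argue with the norm/transfer for the simply connected part and the torus part, killing the cocharacter via Bruhat–Tits-type arguments over $\OO\lps{t}$ on the boundary near $\infty$.
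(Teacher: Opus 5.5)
Your outline has a genuine gap at the geometric presentation step, and this is exactly the difficulty the paper is built to get around. In mixed characteristic, the available presentation lemma (\Cref{prop:geometric-presentation}) guarantees finiteness over $S$ only for closed subschemes of codimension $\ge 2$ in $X$. A closed subscheme that merely contains no component of an $\OO$-fibre becomes only $S$-quasi-finite. For a general, possibly anisotropic, $G$, the locus where a generically trivial $E$ is nontrivial is a priori a divisor. The one-dimensional case at the generic points of the closed fibres lets you make this divisor avoid fibre components, but there is no direct way to push it into codimension $2$. Unramifiedness (smoothness of $X$ over $\OO$) is what gives even the quasi-finite presentation; it does not give finiteness. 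Your sentence about choosing the presentation ``compatibly on both fibres'' simply asserts the missing statement. If the bad locus is only quasi-finite over $S'$, it need not be closed in $\Pj^1_{S'}$, since it can run off towards $\infty$ in the generic fibre. Then the transported torsor cannot be glued with the trivial torsor near $\infty$, and you never obtain a torsor on $\Pj^1_{S'}$. This is the ``loss of one-dimensional flexibility'' described in the introduction.

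The step you single out as the main obstacle is, by contrast, already settled. For any reductive $G$ over a semilocal base and any torsor $\mathcal E$ on the whole $\Pj^1$, sectionwise constancy (\Cref{thm:sectionwise-constancy}) gives $s^*\mathcal E\simeq\infty^*\mathcal E$ with no cocharacter analysis. Your norm/transfer idea could not work anyway, since nonabelian $H^1$ has no corestriction. There is a secondary problem too: equating groups needs the diagonal base change $X\times_S\Spec R$, which supplies a constant group to compare with. Over $\A^1_{S'}$ there is no such group, and ``replacing $G$ by an inner form'' does not transport $E$.

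The paper deals with anisotropy upstream, in four steps.
\begin{itemize}
\item It starts from the partial family of \Cref{prop:partial-family}, on an open of $\Pj^1_R$ whose complement has codimension $\ge 3$.
\item It pulls this family back along the power map $\varphi_d$ attached to a finite \'etale $Y\subset\Gm_R$ over which $G$ acquires a Borel (\Cref{prop:power}). This trivializes the family away from $Y$ over $R_f$, for some $f$ outside the generic points of the closed fibres.
\item It modifies the family (\Cref{prop:normalization}) so that the missing locus lies over a closed $Z\subset V(f)$ of codimension $\ge 2$. Only now does the presentation lemma apply, with $V(f)$ quasi-finite and $Z$ finite, producing a family on an open of $\Pj^1_C$ over a curve $C\subset\A^1_R$.
\item The graph of $[F_H^{dN}:F_D^{hN}]$, together with the dilation \Cref{lem:dilation}, collapses this two-dimensional family to a torsor on all of $\Pj^1_R$ (\Cref{prop:graph}).
\end{itemize}
None of these ingredients appears in your plan, and without them it never reaches a torsor on $\Pj^1$.
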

The equivalence follows from the twisting bijections in nonabelian cohomology: the injectivity of ($\bigstar_{\!G}$) amounts to the assertion that the kernel of ($\bigstar_{\!G'}$) is trivial for all strongly inner forms $G'$ of $G$; see  \cite{Gir71}*{Chapitre III, Proposition 2.6.1 (i)}\textcolor{red}{.}
\begin{corollary}\label{cor:local}
Let $G$ be a reductive group scheme over an unramified regular local ring $R$. Then every
generically trivial $G$-torsor over $R$ is  trivial.
\end{corollary}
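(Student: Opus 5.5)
The corollary should follow from \Cref{thm:main} once we check that an unramified regular local ring meets its hypotheses. Let $R$ be an unramified regular local ring with residue field $k$ and fraction field $K$.

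\emph{Equicharacteristic case.} Take $\OO$ to be the prime field of $R$, either $\mathbb{Q}$ or $\mathbb{F}_p$. This is a field, hence a Dedekind ring for our purposes; if the convention demands positive dimension, use $\OO=\mathbb{Z}$ or $\mathbb{F}_p[t]$ instead. Then $R$ is flat over $\OO$. It is also geometrically regular over $\OO$: in characteristic $0$ this is automatic, and over the perfect field $\mathbb{F}_p$, regularity implies geometric regularity, since every finite purely inseparable extension of $\mathbb{F}_p$ is trivial. Alternatively, this case is just Fedorov--Panin \cite{FP15}.

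\emph{Mixed characteristic $(0,p)$.} Take $\OO=\mathbb{Z}_{(p)}$, or $\mathbb{Z}$ localized appropriately. Since $R$ is a domain in which $p\neq 0$, it is torsion-free over $\mathbb{Z}_{(p)}$, hence flat.

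For geometric regularity we must show that the fibres of $\Spec R\to\Spec\OO$ are geometrically regular.
\begin{itemize}
\item The generic fibre is $R[1/p]$. It is a localization of a regular ring, hence regular, and it lies in characteristic $0$, so it is geometrically regular.
\item The closed fibre is $R/pR$. Unramifiedness means $p\notin\fm^2$, so $p$ is part of a regular system of parameters and $R/pR$ is regular local.
\end{itemize}
The remaining point, and the main obstacle, is that $R/pR$ is \emph{geometrically} regular over $\mathbb{F}_p$. Over a perfect field, regularity is equivalent to geometric regularity for Noetherian algebras. This is the standard fact that, for $k$ perfect, $A\otimes_k k'$ is regular for every finite extension $k'/k$: such a $k'$ is separable over $k$, so $A\otimes_k k'$ is étale over $A$. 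Hence $R/pR$ is geometrically regular over $\mathbb{F}_p$.

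With both fibres geometrically regular and $R$ Noetherian and flat over $\OO$, $R$ is flat and geometrically regular over the Dedekind ring $\OO$. A local ring is in particular semilocal.

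\Cref{thm:main} then gives the injectivity ($\bigstar_{\!G}$), so the kernel of $\Hone(R,G)\to\Hone(K,G)$ is trivial. That is, every generically trivial $G$-torsor over $R$ is trivial.
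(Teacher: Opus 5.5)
Your proposal is correct and follows the paper's own argument: \Cref{thm:main} applies because $R$ is flat and geometrically regular over its prime subfield in the equicharacteristic case, and over $\mathbb{Z}_{(p)}$ when $p\notin\fm^2$; you just spell out the fibrewise check that the paper leaves implicit. The one blemish is the parenthetical fallback to $\OO=\mathbb{F}_p[t]$, which is unnecessary because the paper allows $\OO$ to be a field, and which would not give flatness in general (e.g.\ for $R=\mathbb{F}_p$).
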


We recall that a regular local ring ($R,\fm$) is \emph{unramified} if either $R$ contains a field or 
 $p\notin\mathfrak m^2$ with $p:= \mathrm{char}(R/\fm)$. In the former case $R$ is flat and geometrically regular over its prime subfield, while in the latter case $R$ is flat and geometrically regular over
$\mathbb Z_{(p)}$, so the corollary follows from \Cref{thm:main}.

The Grothendieck--Serre conjecture has numerous concrete consequences, a few of which we now illustrate.
The first application concerns the uniqueness of reductive models over regular semilocal rings.

\begin{theorem}\label{red-models}
Let $\cO$ be a Dedekind ring, and let $R$ be a flat, geometrically regular, Noetherian semilocal $\cO$-algebra.
Let $G, G'$ be reductive $R$-groups. Then $G \simeq G'$ iff $G_{\Frac{R}} \simeq G_{\Frac{R}}'$. 
\end{theorem}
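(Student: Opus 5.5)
Assuming $G_{\Frac R}\simeq G'_{\Frac R}$, the plan is to reduce to \Cref{thm:main} applied to the automorphism group scheme of $G$. The standard mechanism: reductive $R$-groups that are étale-locally isomorphic to $G$ correspond to torsors under $\mathrm{Aut}(G)$, via $G'\mapsto \mathrm{Isom}(G,G')$. However $\mathrm{Aut}(G)$ is in general not reductive (its component group $\mathrm{Out}(G)$ is a possibly infinite étale group when $G$ has a central torus), so we proceed in steps.

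\emph{Step 1: $G$ and $G'$ have the same type over $R$.} Since $R$ is semilocal and regular (hence normal with connected components integral), we may assume $R$ is a domain. The type of a reductive group (its root datum) is locally constant on $\Spec R$. Because $\Spec R$ is connected and $G_{\Frac R}\simeq G'_{\Frac R}$, the geometric fibres of $G$ and $G'$ have the same root datum. Therefore $\mathrm{Isom}(G,G')$ is an $\mathrm{Aut}(G)$-torsor $E$ over $R$, and $E$ has a $\Frac R$-point.

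\emph{Step 2: kill the outer part.} Consider the exact sequence $1\to G^{\mathrm{ad}}\to \mathrm{Aut}(G)\to \mathrm{Out}(G)\to 1$, where $\mathrm{Out}(G)$ is a locally constant étale $R$-group scheme. The pushout of $E$ to $\mathrm{Out}(G)$ is an étale-locally constant torsor, i.e.\ a torsor under an étale group, trivial over $\Frac R$. Since $R$ is a normal domain, a torsor under a (possibly non-finite but locally constant, hence ind-finite étale) group that is generically trivial is trivial: its connected component through the generic section is finite étale over $R$ with a generic section, hence is an isomorphism onto $\Spec R$ by normality. So $E$ reduces to a $G^{\mathrm{ad}}$-torsor, namely $E'$; however, $E'$ is a priori only generically trivial up to the image of $\mathrm{Out}(G)(\Frac R)$-fibres. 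To handle this, I would use that $\mathrm{Out}(G)(R)\to \mathrm{Out}(G)(\Frac R)$ is bijective (by the same normality argument) together with the splitting of $\mathrm{Aut}(G)\to\mathrm{Out}(G)$ furnished by a pinning étale-locally. More cleanly: twisting by the $\mathrm{Out}$-class, we can replace $G$ by the inner form $G'' $ determined by the image of the generic isomorphism, and reduce to the case where $G_{\Frac R}\simeq G'_{\Frac R}$ via an isomorphism inducing, on $\mathrm{Out}$, an element of $\mathrm{Out}(G)(R)$; composing with a lift of it in $\mathrm{Aut}(G)(R)$ (which exists over the semilocal $R$ after choosing a quasi-pinning — this is the step I expect to need care), we obtain a generic isomorphism which is inner, i.e.\ $E$ reduces to a generically trivial $G^{\mathrm{ad}}$-torsor.

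\emph{Step 3: apply \Cref{thm:main}.} Now $G^{\mathrm{ad}}$ is reductive over $R$, so by \Cref{thm:main} the generically trivial $G^{\mathrm{ad}}$-torsor is trivial. Hence $E$ has an $R$-point, i.e.\ $G\simeq G'$. The converse direction is trivial. The main obstacle is Step 2: correctly reducing the structure group from $\mathrm{Aut}(G)$ to $G^{\mathrm{ad}}$ while preserving generic triviality, which hinges on lifting elements of $\mathrm{Out}(G)(R)$ to $\mathrm{Aut}(G)(R)$; I would first try to do this using the existence of a quasi-pinning of the quasi-split inner form over semilocal $R$, and alternatively use the twisting bijection (injectivity form of \Cref{thm:main}) applied after twisting $G$ by $E$ itself.
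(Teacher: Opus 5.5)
Your Steps 1 and 3 are the paper's, but Step 2 as written rests on a claim that is false in general. You assume that every element of $\mathrm{Out}(G)(R)$ lifts to $\mathrm{Aut}(G)(R)$. The standard splitting of $\mathrm{Aut}(G)\to\mathrm{Out}(G)$ comes from a (quasi-)pinning of $G$ itself, which exists only when $G$ is quasi-split, and here $G$ is arbitrary. A quasi-pinning of the quasi-split inner form splits the sequence for a different group, so it does not help. Concretely, take $G=\mathrm{SL}_1(D)$ with $D$ a central division algebra of degree $3$ over a field such as $\mathbb Q$. Then $\mathrm{Out}(G)\simeq\mathbb Z/2$ is constant, but its nontrivial element would have to come from an anti-automorphism of $D$. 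No such anti-automorphism exists, because $[D]$ has order $3$ in the Brauer group. Your worry about an arbitrary reduction is therefore real. In this example, take $G'=G$, $E=\mathrm{Aut}(G)$ and $q_0$ the nontrivial element of $\mathrm{Out}(G)(R)=Q(R)$. The $G^{\mathrm{ad}}$-torsor $E\times_{Q,q_0}\Spec R$ then has no $\Frac R$-point, although $E$ is trivial. So the route through lifting outer automorphisms does not close.

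The missing observation is that no lifting is needed, and you had already produced the right section. Put $K=\Frac R$ and $Q=E/G^{\mathrm{ad}}$. Choose $e\in E(K)$, and let $q\in Q(R)$ be the unique extension of its image $\bar e\in Q(K)$. This is exactly the section given by the connected component of $Q$ through $\bar e$, i.e.\ $Q(R)\simeq Q(K)$ by normality. Since $G^{\mathrm{ad}}$ acts on $E$ on the right and preserves the fibres of $E\to Q$, the fibre $P=E\times_{Q,q}\Spec R$ is a $G^{\mathrm{ad}}$-torsor. By construction $e\in P(K)$, so $P$ is generically trivial on the nose. \Cref{thm:main} then trivializes $P$, and $\varnothing\neq P(R)\subset E(R)$ gives $G\simeq G'$. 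This is the paper's argument, following Guo.

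Your cohomological alternative can also be made rigorous, though not by ``twisting by $E$''. Write $[E]=i_*[P]$ for a $G^{\mathrm{ad}}$-torsor $P$. Exactness over $K$ gives $[P_K]=\delta(\sigma)$ for some $\sigma\in\mathrm{Out}(G)(K)=\mathrm{Out}(G)(R)$. The injectivity form of \Cref{thm:main} for $G^{\mathrm{ad}}$ then yields $[P]=\delta(\sigma)$ over $R$, whence $[E]=i_*\delta(\sigma)=1$.
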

In particular, a reductive $R$-group $G$ is split iff $G_{\Frac R}$ is split. This case was already known \cite{Ces22a}*{Theorem 1.2} and in fact follows from the split case of the Grothendieck-Serre conjecture.

As a second application, we have the following product formula, which was obtained in \cite{Ces22a} for $G$ quasi-split.

\begin{theorem}\label{prod-formula}
 For a Noetherian semilocal ring $R$ that is flat and geometrically
regular over a Dedekind ring $\OO$, an $r\in \fm_R \backslash \{0\}$, and the $r$-adic completion $\widehat{R}$,
\[
G(\widehat{R}[\tfrac{1}{r}])=G(\widehat{R}) G(R[\tfrac{1}{r}]) \quad \text{ for every reductive $R$-group $G$}.
\]
\end{theorem}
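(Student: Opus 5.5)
The plan is to deduce the product formula from \Cref{thm:main} via the standard patching (Beauville--Laszlo) argument. The inclusion $G(\widehat{R})G(R[\tfrac1r])\subseteq G(\widehat{R}[\tfrac1r])$ is clear, so the content is the reverse inclusion.

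\emph{Construction of a torsor from $g$.} Fix $g\in G(\widehat{R}[\tfrac1r])$. Glue the trivial $G$-torsor over $R[\tfrac1r]$ with the trivial $G$-torsor over $\widehat{R}$ along the isomorphism over $\widehat{R}[\tfrac1r]$ given by multiplication by $g$. This yields a $G$-torsor $E_g$ over $R$. To justify the gluing, note that $(R,r)\to(\widehat{R},r)$ is a flat map inducing an isomorphism $R/r\simeq\widehat{R}/r$, since $R$ is Noetherian. Hence Beauville--Laszlo (or fpqc descent for affine schemes) gives an equivalence between $G$-torsors over $R$ and triples consisting of a torsor over $R[\tfrac1r]$, a torsor over $\widehat{R}$, and an identification over $\widehat{R}[\tfrac1r]$; for affine $G$ this reduces to gluing of modules. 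Unwinding the equivalence, $E_g$ is trivial if and only if $g\in G(\widehat{R})G(R[\tfrac1r])$. Indeed, a trivialization of $E_g$ restricts to sections $a\in G(R[\tfrac1r])$ and $b\in G(\widehat{R})$ with $g=b\,a^{-1}$, up to the usual ordering convention.

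\emph{Generic triviality of $E_g$.} The restriction of $E_g$ to $R[\tfrac1r]$ is trivial by construction. Since $r\neq0$ and $R$ is a domain, or at least $r$ is a nonzerodivisor in each local ring because $R$ is regular, $\Frac R$ is a localization of $R[\tfrac1r]$. So $E_g$ is trivial over $\Frac R$, that is, it lies in the kernel of ($\bigstar_{\!G}$). By \Cref{thm:main}, $E_g$ is trivial, and the previous step gives $g\in G(\widehat{R})G(R[\tfrac1r])$.

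\emph{Main obstacle.} The main technical point is making the gluing rigorous: checking that $R\to\widehat{R}$ is flat and that $R/r^n\simeq\widehat{R}/r^n$ for all $n$, and ordering the factors correctly. If $R$ is only semilocal and possibly not a domain, one also has to interpret $\Frac R$ as the total ring of fractions, which is a finite product of fields because $R$ is regular, hence a product of domains. Then the inclusion $R[\tfrac1r]\to\Frac R$ still holds provided $r$ is a nonzerodivisor, which is where we use that $r$ does not vanish on any component; otherwise we apply the argument componentwise. No further input beyond \Cref{thm:main} should be needed.
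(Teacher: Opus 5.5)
Your proposal is correct and is essentially the paper's argument: formal (Beauville--Laszlo) gluing identifies $G(\widehat{R})\backslash G(\widehat{R}[\tfrac1r])/G(R[\tfrac1r])$ with $G$-torsors over $R$ trivial over $R[\tfrac1r]$ and $\widehat{R}$. These torsors are generically trivial, hence trivial by \Cref{thm:main}. One small correction: your aside that $r$ is a nonzerodivisor in every local ring ``because $R$ is regular'' fails when $r$ vanishes on a connected component. Your componentwise fallback covers this, since on such a component $R[\tfrac1r]$ and $\widehat{R}[\tfrac1r]$ are zero and the formula is vacuous.
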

In fact, by Beauville-Laszlo-type formal gluing (see, e.g., \cite{BC22}*{Lemma 2.2.11 (b)} or \cite{Fed16}*{Proposition~4.4}), the double coset space
$ G(\widehat{R}) \backslash G(\widehat{R}[\tfrac{1}{r}])/G(R[\tfrac{1}{r}])$ is naturally in bijection with the isomorphism classes of $G$-torsors over $R$ that trivialize over both $R[\tfrac{1}{r}]$ and $\widehat{R}$, which are all trivial by \Cref{thm:main}.

We note that the case $\dim R\le 1$ of \Cref{conj:gs} is proved by establishing the corresponding product formula using deep group-theoretic results (notably, Bruhat-Tits theory), whereas the product formula for general $R$ seems to resist any direct attack.

The next result concerns a conjecture attributed to Colliot-Th\'el\`ene \cite{CT79} and Panin \cite{Pan09}: a reductive group scheme over a regular local ring should have a parabolic subgroup of a given type whenever its generic fibre does. Further results for quadratic forms include \cite{PP10}, \cite{PP15}, \cite{Scu18}. For Borel subgroups, this was reduced to the Grothendieck--Serre conjecture \cite{Ces22a}*{Theorem~9.5}.

\begin{theorem}
Let $\cO$ be a Dedekind ring, and let $R$ be a flat, geometrically regular, Noetherian semilocal $\cO$-algebra.
Let $G$ be a reductive $R$-group scheme. Then $G$ is quasi-split iff $G_{\Frac{R}}$ is quasi-split.
\end{theorem}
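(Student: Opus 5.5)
The forward implication is immediate, since quasi-splitness is stable under base change. For the converse, suppose $G_{\Frac R}$ is quasi-split. I will reduce to \Cref{red-models}. The aim is to build a quasi-split reductive $R$-group $G^{\mathrm{qs}}$ whose generic fibre is isomorphic to $G_{\Frac R}$; then \Cref{red-models} gives $G\simeq G^{\mathrm{qs}}$, and $G$ is quasi-split.

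The natural candidate for $G^{\mathrm{qs}}$ is the quasi-split inner form of $G$. The first step is to see that it exists over any base. Since $R$ is semilocal, connected components of $\Spec R$ can be treated one at a time, so assume $\Spec R$ connected. Let $G_0$ be the split (Chevalley) group over $R$ with the same root datum as $G$. Then $G$ corresponds to a class in $\Hone(R,\mathrm{Aut}(G_0))$. The automorphism group splits as $\mathrm{Aut}(G_0)=G_0^{\mathrm{ad}}\rtimes \mathrm{Out}(G_0)$, where the splitting comes from a pinning. So the image of this class in $\Hone(R,\mathrm{Out}(G_0))$ lifts back through the section $\mathrm{Out}(G_0)\to\mathrm{Aut}(G_0)$. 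This lift gives a group $G^{\mathrm{qs}}$, obtained by twisting $G_0$ by an $\mathrm{Out}$-torsor acting through pinning-preserving automorphisms. The pinning of $G_0$ descends, so $G^{\mathrm{qs}}$ carries a Borel subgroup and is quasi-split. Moreover, $G$ and $G^{\mathrm{qs}}$ have the same image in $\Hone(R,\mathrm{Out})$, so $G$ is an inner form of $G^{\mathrm{qs}}$.

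The second step is to compare generic fibres. Over $K=\Frac R$, both $G_K$ and $G^{\mathrm{qs}}_K$ are quasi-split and have the same outer class. A standard fact (SGA3 XXIV, or Borel–Tits) says that two quasi-split groups with the same $\mathrm{Out}$-class are isomorphic. The reason is that each corresponds to a pinned twist, and the pinned twist is determined by the $\mathrm{Out}$-torsor, since $\mathrm{Aut}$ of a pinned group is $\mathrm{Out}$. Hence $G_K\simeq G^{\mathrm{qs}}_K$.

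The third step is to apply \Cref{red-models} to $G$ and $G^{\mathrm{qs}}$, which gives $G\simeq G^{\mathrm{qs}}$. The main point needing care is the second step: the isomorphism must be recorded correctly over the field, and for disconnected $\Spec R$ or a non-domain $R$, "$\Frac R$" must be read as the total ring of fractions, componentwise. Otherwise the argument is formal, and all the depth sits in \Cref{red-models}, hence in \Cref{thm:main}.
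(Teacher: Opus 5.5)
Your argument is correct, but it does not follow the route the paper indicates. The paper gives no separate proof of this statement. It cites \cite{Ces22a}*{Theorem~9.5}, which reduces the existence of a Borel subgroup over $R$ to the Grothendieck--Serre conjecture, and \Cref{thm:main} now supplies that input.

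You instead obtain the statement as a formal corollary of \Cref{red-models}, in three steps:
\begin{enumerate}[label=(\roman*)]
\item you build the quasi-split inner form $G^{\mathrm{qs}}$ over $R$ by twisting the Chevalley group through the pinning section $\mathrm{Out}(G_0)\to\mathrm{Aut}(G_0)$;
\item you use uniqueness of the quasi-split form within an outer class, over each field factor of $\Frac R$, to get $G_{\Frac R}\simeq G^{\mathrm{qs}}_{\Frac R}$;
\item you conclude $G\simeq G^{\mathrm{qs}}$.
\end{enumerate}

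The advantage of your route is that it stays inside the paper. The only extra inputs are standard structure theory. For the quasi-split inner form over $R$ this is SGA3, Expos\'e~XXIV. Over a field, you need the existence of a Galois-stable quasi-pinning. That is what actually makes a quasi-split group the pinned twist of its outer class, so state it explicitly rather than only saying ``$\mathrm{Aut}$ of a pinned group is $\mathrm{Out}$''.

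The price is a detour. Through \Cref{red-models} you apply \Cref{thm:main} to the adjoint group of $G$, and you use normality of $R$ to extend a section of the $\mathrm{Out}$-torsor. A more direct variant avoids \Cref{red-models} altogether:
\begin{enumerate}[label=(\roman*)]
\item write $G$ as the twist of $G^{\mathrm{qs}}$ by a $(G^{\mathrm{qs}})^{\mathrm{ad}}$-torsor $P$;
\item a Borel subgroup of $G_{\Frac R}$ reduces $P_{\Frac R}$ to the image of a Borel of $G^{\mathrm{qs}}$;
\item since the unipotent radical is split, this reduces $P_{\Frac R}$ further to the maximal torus of $(G^{\mathrm{qs}})^{\mathrm{ad}}$, which is quasi-trivial;
\item hence $P_{\Frac R}$ is trivial, and \Cref{thm:main} applied to $(G^{\mathrm{qs}})^{\mathrm{ad}}$ gives $P\simeq 1$.
\end{enumerate}
Either way, as you note, all the depth sits in \Cref{thm:main}.
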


\bpp[History]
\label{history}
Since it was proposed by Serre and Grothendieck, \Cref{conj:gs} has been studied extensively, and the following cases have been established.
\benumr
\item The case when $G$ is a torus was proved by Colliot-Th\'el\`ene and Sansuc in \cite{CTS87}.
\item The case when $\dim R\le 1$ was established in \cite{Guo22}, building on Nisnevich's work \cite{Nis82} and \cite{Nis84}, with valuation ring variants treated in \cite{Guo24} and \cite{GL23}*{Appendix~A}\textcolor{red}{.}
Several subcases were proved in \cite{Har67}, \cite{BB70}, \cite{BrT3}, \cite{PS16}, \cite{BVG14}, \cite{BFF17}, \cite{BFFH20}. This
one-dimensional case implies the case when $R$ is Henselian\textcolor{red}{;} see \cite{BB70} and \cite{CTS79}*{assertion~6.6.1}.

\item The equicharacteristic case, that is, when $R$ contains a field $k$, was settled by Fedorov--Panin \cite{FP15} and Panin \cite{Pan20}, with simplifications in \cite{Fed22a}. 
Prior to these works, significant subcases were proven in \cite{Oja80}, \cite{CTO92}, \cite{Rag94}, \cite{PS97}, \cite{Zai00}, \cite{OP01}, \cite{OPZ04}, \cite{Pan05}, \cite{Zai05}, \cite{Che10}, \cite{PSV15}, \cite{Pan20b}. Panin \cite{Pan22a} also proved variants beyond connected reductive groups.
Recently, Bouthier--\v{C}esnavi\v{c}ius--Scavia \cite{BCS25} proved the variant of \Cref{conj:gs} for all smooth affine $k$-groups $G$ and essentially smooth semilocal $k$-algebras $R$. 
\item In mixed characteristic, \v{C}esnavi\v{c}ius \cite{Ces22a}*{Theorem~9.1} settled the quasi-split unramified case.
 Fedorov \cite{Fed21} previously proved the split case under additional assumptions.
 Later, Guo and the author \cite{GL23} proved the case when $G$ descends to a reductive $\cO$-group, allowing, more generally, $\cO$ to be a semilocal Pr\"ufer ring (see also \cite{GL24}).
\v{C}esnavi\v{c}ius--Fedorov \cite{CF23} (after Fedorov's result \cite{FedMRL}) proved the unramified case when $G^{\mathrm{ad}}$ is totally isotropic.

\item Several sporadic cases where either $R$ or $G$ is special were established in  \cite{Gro68}*{Remarque~1.11~a)}, \cite{Oja82}, \cite{Nis89}, \cite{Pan22b}, \cite{Fir23}, \cite{BFFP22}, \cite{Pan19b}.
\eenum 
\epp

\begin{pp}[Outline of the proof]
The geometric approach to the Grothendieck-Serre conjecture was pioneered in \cite{CTO92}, carried through in \cite{FP15} in the equicharacteristic case, and developed much further in \cite{Fed21}, \cite{Ces22}, \cite{GL23}, \cite{CF23} in mixed characteristic.  Let $E$ be a generically trivial $G$-torsor over $R$ that we wish to trivialize; by Popescu's theorem \cite{SP}*{Theorem 07GC}, we may assume that $R$ is a semilocal ring of an $\cO$-smooth affine scheme. The core idea of this approach is to develop Gabber--Quillen-type presentation lemmas to study the geometry of $\Spec R$ and then use patching arguments to eventually find a $G$-torsor $\cE$ over $\bP_R^1$  such that $\cE|_{0_R}\simeq E$ and $\cE|_{\infty_R}\simeq 1$. 
We can then apply known techniques to prove that $E$ is trivial (for instance, by applying the sectionwise constancy result). 
 
 However, in contrast to the equicharacteristic situation, one immediately encounters a serious difficulty in mixed characteristic: the loss of one-dimensional flexibility in geometric arguments.  
 For instance, to apply the geometric approach above we have to build
a closed $Z\subset \Spec R$ of codimension $\ge 2$ away from which our generically trivial torsor $E$ is ``simpler'', while in equicharacteristic (namely when $\cO$ is a field), codimension $\ge 1$ suffices.
 
 The main new idea of this paper is to add one more dimension to restore the flexibility to ``move''; that is, we will work with smooth surfaces over $R$ and torsors over them. The main stages of the proof are as follows:
 
 \begin{itemize}
 \item [(1)]
 We construct an affine open $C\subset\A^1_R$
 , a section
$s\in C(R)$, an $R$-finite closed subscheme $Z\subset C$, and a $G$-torsor $Q$ that is defined over an open of
$\Pj^1_C$ containing
\[
 \Pj^1_{C\setminus Z},\qquad 0_C,\qquad\infty_C,
\]
with $Q|_{0_C}\simeq E_C$ and $s^*E_C\simeq E$, together with a
trivialization at the infinity section. The two factors in
$\Pj^1_C=C\times_R\Pj^1_R$ have different roles: $C$
is the curve on which we want to construct torsors, while the second factor
parametrizes the given family.
This construction occupies Sections 4--6 and starts with (a weak version of) the partial family supplied by \v{C}esnavi\v{c}ius--Fedorov, as
recalled in Proposition~\ref{prop:partial-family}.

 \item [(2)] Given the family in (1), we can pull it back along the graph of any $C$-morphism $C\to \bP_R^1$ whose image avoids the missing locus of the family. Concretely, we choose disjoint finite relative divisors $H,D\subset\Pj^1_R$
   such that $H\subset C$ contains $Z\cup s$, while $D$ contains the boundary of $C$, including the infinity section. We can then consider the pullback along the graph of the morphism \[
 j=[F_H^{d}:F_D^{h}]\colon\Pj^1_R\to\Pj^1_R,
\] 
where $ F_H\in H^0(\bP_R^1,\cO(h))$ and $F_D\in H^0(\bP_R^1,\cO(d))$ define $H$ and $D$, respectively.  Note that $j$ sends $H$ to zero and $D$ to infinity. Its graph over
$C_0=\Pj^1_R\setminus D$ lies in the domain of $Q$:
away from $Z$ the whole parameter line is available, and over
$Z$ the graph lies on its zero section. The graph pullback
therefore gives a torsor over $C_0$ whose restriction at
$s$ is $E$; a priori, the pullback is not defined on the entire projective line $\bP_R^1$.
 
 Now the key observation is that the graph pullback to $C$ indeed extends to a torsor over the entire $\bP_R^1$ (while keeping track of the relevant trivialization and $s$-pullback) if one replaces $j$ by its postcomposition with the power map 
 \[
 \varphi_N: \bP_R^1 \to \bP_R^1, \quad \varphi_N([x:y])=[x^N:y^N]  \quad \text{ for }  \quad N\gg 0;
 \]
  see Proposition~\ref{prop:graph}. This is made possible by the dilation result in \Cref{lem:dilation}. This is reminiscent of an idea of Quillen \cite{Qui76}: for any $f\in A$ and any ``nice'' $A$-group $G$, if a $G$-torsor over $A_f[u]$ is trivialized modulo $u$, then for $N\gg 0$ its base change along the dilation map $A_f[u] \to A_f[u], u\mapsto f^Nu$ should extend to a $G$-torsor over $A[u]$, which admits a trivialization modulo $u$ (in fact, we need a trivialization modulo $uf$ for our purpose) that agrees with the prescribed trivialization over $A_f$.

 These constructions give a $G$-torsor $\mathcal F$ over $\Pj^1_R$ with
$
 s^*\mathcal F\simeq E, \infty_R^*\mathcal F\simeq1.
$
We can now conclude that $E\simeq 1$ by the sectionwise constancy result (see \Cref{thm:sectionwise-constancy}).
 \end{itemize}

\end{pp}

\begin{pp}[Notation and conventions]
Throughout this paper, we work with commutative rings with units. For a
point $s$ of a scheme (resp., for a prime ideal $\mathfrak{p}$ of a ring), we let $k_s$ (resp., $k_{\mathfrak{p}}$) denote its residue
field. For a ring $A$, we let $\Frac A$ denote its total ring of fractions.
For a morphism of schemes $S\pr\to S$, we denote the base change functor from $S$ to $S\pr$ by $(-)_{S\pr}$. If $S\pr=\Spec R\pr$ is affine, we also write $(-)_{R\pr }$ for $(-)_{S\pr}$.

All torsors are right torsors, and $1$ denotes the trivial
torsor. We write $\mathbf{B}G$ for the classifying stack of $G$.
We use fppf cohomology throughout; for smooth group schemes,
such as reductive group schemes, it agrees with \'etale cohomology.
Trivializations used in gluing are chosen as part
of the data. 
 A semisimple group scheme is \emph{simple} if its geometric
fibres are nontrivial almost simple algebraic groups. 
\end{pp}

\subsection*{Acknowledgements}
I thank Professor Chun-Yin Hui for his support and constant encouragement.

\section{Preliminary results}\label{sec:projective-line-families}

The following weak form of \cite{CF23}*{Proposition~2.8} is our starting point: it supplies
a family of torsors over a ``large'' open of the projective line.

\begin{lemma}
\label{prop:partial-family}
Let $R$ be a Noetherian semilocal ring that is flat and
geometrically regular over a Dedekind ring, let $G$ be a
reductive $R$-group scheme, and let $E$ be a generically
trivial $G$-torsor. There are an open $V\subset\Pj^1_R$,
containing $0_R$, $\infty_R$, and every point of codimension
$\le 2$, and a $G$-torsor $Q$ over $V$ such that
\[
 Q|_{0_R}\simeq E,\qquad Q|_{\infty_R}\simeq1.
\]
\end{lemma}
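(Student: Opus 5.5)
The plan is to derive the lemma directly from \cite{CF23}*{Proposition~2.8}, so the work consists of checking hypotheses and discarding the extra information it provides. That proposition concerns a Noetherian semilocal ring $R$ that is flat and geometrically regular over a Dedekind ring $\OO$, a reductive $R$-group $G$, and a generically trivial $G$-torsor $E$. Its first step is reduction via Popescu's theorem \cite{SP}*{Theorem 07GC}: write $R$ as a filtered colimit of smooth $\OO$-algebras, and spread $G$, $E$, and a generic trivialization out to a semilocalization $R_0$ of a smooth affine $\OO$-scheme. An open $V_0\subset\Pj^1_{R_0}$ with the required properties then base changes to $V=V_0\times_{R_0}R$.

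For this base change we must check three things.
\benumr
\item The sections $0_R$ and $\infty_R$ are pullbacks of $0_{R_0}$ and $\infty_{R_0}$.
\item The codimension condition is preserved. This holds because $R_0\to R$ is flat with regular fibres, so codimension does not go up: a point of $\Pj^1_R$ of codimension $\le 2$ maps to a point of $\Pj^1_{R_0}$ of codimension $\le 2$. This uses flatness, since flat maps satisfy going-down and $\dim \OO_{y}\ge \dim\OO_{x}$ for $y\mapsto x$.
\item The isomorphisms $Q|_{0}\simeq E$ and $Q|_\infty\simeq 1$ pull back to the corresponding isomorphisms over $R$.
\eenum

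In the smooth-semilocal setting, \cite{CF23} produce a $G$-torsor over $\Pj^1_{R}$ minus a closed subset $Y$. Here $Y$ is finite over $R$ and disjoint from $0_R\cup\infty_R$, and the torsor has the prescribed fibres at $0$ and $\infty$. Concretely, $E$ is trivial away from a closed $Z\subset\Spec R$ of codimension $\ge 2$, obtained by purity/extension of generically trivial torsors across codimension-one points. The torsor is then built by patching $E$ with the trivial torsor via a Gabber--Quillen-type presentation. The resulting missing locus $Y$ lies over $Z$, and since it is quasi-finite over $Z$, it has codimension $\ge 3$ in $\Pj^1_R$: codimension $\ge2$ from $Z$, plus one from being finite over the base rather than containing full fibres.

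Hence $V:=\Pj^1_R\setminus Y$ contains every point of codimension $\le 2$, as well as $0_R$ and $\infty_R$. The lemma is then just the weakening that forgets the finiteness of $Y$ and the extra structure on $Q$.

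The main obstacle is matching conventions with \cite{CF23}. First, one must confirm that their statement gives codimension $\ge 3$ for the missing locus, or equivalently that it is $R$-finite and lies over a codimension-$\ge2$ subset. Second, one must confirm that their hypotheses, which may be stated for semilocal rings of smooth schemes or under a total isotropy assumption, cover general reductive $G$ at this stage of their argument. I expect the partial-family construction there to be independent of isotropy, since isotropy is used only later when extending across $Y$. If their statement were restricted, I would instead rerun their construction with the Popescu reduction above, checking only that the patching step uses no isotropy.
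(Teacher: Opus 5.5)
Your plan is the paper's: the lemma is stated there as a weak form of \cite{CF23}*{Proposition~2.8} with no further argument. Your codimension count is the right way to see that the missing locus is avoided: an $R$-quasi-finite closed subset of $\Pj^1_R$ lying over a locus of codimension $\ge 2$ in $\Spec R$ has codimension $\ge 3$.

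One correction: drop the Popescu base-change detour. Item (ii) rests on $R_0\to R$ being flat, and the maps from the smooth algebras of a Popescu presentation to $R$ are in general not flat. A dimension count already rules out flatness for $R=k\fps{x}$ once the approximating algebras have transcendence degree $\ge 2$. So (ii) is unjustified as written.

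The detour is also unnecessary. The paper applies the cited proposition to such $R$ directly, and in its main proof it invokes the lemma only after $R$ has been reduced to a semilocal ring of a smooth affine $\OO$-scheme.
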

Notice that the reduced complement $B=(\Pj^1_R\setminus V)_{\mathrm{red}}$ is $R$-finite because it
is proper over $R$ and contained in the affine open $\bA^1_R=\bP^1_R\backslash \{\infty_R\}$.

The following Horrocks-type statement is useful for recognizing
torsors that are pulled back from the base. It strengthens earlier versions in the literature, although these already suffice for our purposes; see \cite{PSV15}*{Proposition~9.6}, \cite{Tsy19}, \cite{Fed21}*{Proposition~2.2}, \cite{Ces22a}*{Lemma~8.3}.

\begin{lemma}[\cite{CF23}*{Proposition 3.1}]
\label{prop:closed-fibre-descent}
Let $G$ be a reductive group scheme over a semilocal ring $A$, and let $Q$ be a $G$-torsor over
$\Pj^1_A$. Then $Q$ descends to a $G$-torsor
over $A$ if and only if, for every maximal ideal
$\fm\subset A$, the restriction
$Q|_{\Pj^1_{k_{\fm}}}$ descends to a
$G$-torsor over $k_{\fm}$.
\end{lemma}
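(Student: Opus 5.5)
The statement to prove is Lemma \ref{prop:closed-fibre-descent}, the Horrocks-type criterion. The "only if" direction is trivial: if $Q\simeq P_{\Pj^1_A}$ for a $G$-torsor $P$ over $A$, then $Q|_{\Pj^1_{k_\fm}}\simeq (P_{k_\fm})_{\Pj^1_{k_\fm}}$.

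For the converse, the plan is to show that $Q\simeq (\infty_A^*Q)_{\Pj^1_A}$. First I will reduce to the case where $\infty_A^*Q$ is trivial. Set $P:=\infty_A^*Q$ and twist by $P$: replace $G$ by the inner form $G'={}^{P}G$ and $Q$ by the $G'$-torsor $Q'$ corresponding to $Q$ under the twisting bijection $H^1(\Pj^1_A,G)\simeq H^1(\Pj^1_A,G')$. The twisted torsor has $\infty_A^*Q'$ trivial. The hypothesis on closed fibres is preserved, because twisting is compatible with base change and with restriction to $\infty$. So we may assume that $\infty_A^*Q$ is trivial and that each $Q|_{\Pj^1_{k_\fm}}$ is pulled back from $k_\fm$. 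Restricting to $\infty$ then shows that each $Q|_{\Pj^1_{k_\fm}}$ is in fact trivial, and the goal becomes: $Q$ is trivial.

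Next I would show that $Q$ is trivial over a neighbourhood of the closed fibres. For a field $k$, the trivial $G$-torsor over $\Pj^1_k$ is rigid: since $H^1(\Pj^1_k,\mathrm{Lie}(G)\otimes\OO)=0$, the deformation obstruction for $Q$ vanishes. More concretely, I would use the description of $G$-torsors over $\Pj^1$ via gluing $\A^1$ with a neighbourhood of $\infty$, together with the Birkhoff/Grothendieck-type factorization over the semilocal ring. Concretely, $Q$ restricted to $\Pj^1_A\setminus\{\infty\}=\A^1_A$ and to the formal neighbourhood of $\infty_A$ is described by a loop element. Triviality on the closed fibres means that this element factors as $g_+g_-$ modulo the Jacobson radical $J$. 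Since $\mathrm{Lie}$-cohomology vanishes in degree one, the factorization lifts $J$-adically, and hence over the $J$-adic completion.

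The last step is to descend from the completion back to $A$. I expect this step to be the main obstacle: passing from triviality over $\widehat A$, or over the closed fibres, to triviality over $A$ itself, when $A$ is neither Noetherian nor complete. To handle it I would instead use the semicontinuity-style argument of \cite{CF23}. Embed $G\hookrightarrow \GL_n$ (possible étale-locally; reduce via a finite étale cover and a norm/trace argument if necessary). Consider the associated vector bundle, which is trivial on the closed fibres. By cohomology and base change over the semilocal ring, a vector bundle on $\Pj^1_A$ that is trivial on all closed fibres is trivial: $H^1(\Pj^1_{k_\fm},\mathcal V(-1))=0$ forces $H^0(\Pj^1_A,\mathcal V)$ to be free of rank $n$ and to generate. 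I would then get a reduction of $\mathcal V$ to $G$ as a section of $\GL_n/G$ over $\Pj^1_A$, which is affine. Since $\Pj^1_A$ is proper, this section is constant on each fibre, and hence it factors through $A$ by a standard affine-target argument using $H^0(\Pj^1_A,\OO)=A$. This yields $Q\simeq(\infty_A^*Q)_{\Pj^1_A}$, which is trivial in the reduced situation.
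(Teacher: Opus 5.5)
The paper gives no proof of this lemma; it quotes \cite{CF23}*{Proposition~3.1}. Your last paragraph is the classical argument (as in \cite{PSV15}*{Proposition~9.6}), and it is correct once $G\subset\GL_{n,A}$ is a closed subgroup. The induced vector bundle is trivial on the closed fibres, hence trivial by cohomology and base change. Then $Q$ is given by a morphism $\Pj^1_A\to\GL_{n,A}/G$, which factors through $\Spec A$ because the target is affine and $\Gamma(\Pj^1_A,\OO)=A$. Your twist is unnecessary, since a vector bundle pulled back from $k_{\fm}$ is already trivial; if you do twist, you must embed ${}^{P}G$, not $G$. The deformation/Birkhoff paragraph can simply be dropped.

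The genuine gap is the existence of the embedding. Your parenthetical ``\'etale-locally; reduce via a finite \'etale cover and a norm/trace argument'' does not work. There is no norm or trace on nonabelian $H^1$ that would carry triviality from $A'$ back down to $A$; even for abelian coefficients, such an argument would only show that the class is killed by the degree. A finite \'etale cover also need not make $G$ linear. A reductive group whose radical torus is not isotrivial admits no closed embedding into any $\GL_n$, and such groups exist over non-normal semilocal rings. For example, the local ring of a node carries a non-isotrivial $\mathbb{Z}$-torsor, and hence a non-isotrivial rank-two torus. Isotriviality cannot be gained by a finite \'etale base change, because finite \'etale covers compose.

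What is missing is the rigidity that makes descent along an arbitrary faithfully flat cover automatic. Put $Q_0=(\infty_A^*Q)_{\Pj^1_A}$. For any $A$-algebra $A''$, automorphisms of $Q_0$ over $\Pj^1_{A''}$ are the $A''$-points of the affine $A$-group $\underline{\mathrm{Aut}}(\infty_A^*Q)$, because $\Gamma(\Pj^1_{A''},\OO)=A''$. Hence any isomorphism $Q_0\simeq Q$ over $\Pj^1_{A''}$ can be normalized by a constant automorphism to restrict to the identity along $\infty$, and the normalized one is unique.

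It therefore suffices to produce such an isomorphism after a faithfully flat base change $A\to A'$. Uniqueness gives the cocycle condition on $A'\otimes_A A'$, and fpqc descent finishes. Take $A'=\prod_{\fm}A_{\fm}^{\mathrm{sh}}$. It is semilocal and faithfully flat over $A$. Its residue fields at maximal ideals are the $k_{\fm}^{\mathrm{sep}}$, so your closed-fibre hypothesis persists. Over it $G$ is split, hence a closed subgroup of some $\GL_n$ with affine quotient, and your argument applies verbatim there.

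For the semisimple groups to which the paper actually applies the lemma, \cite{Gil21}*{Theorem~1.1} already gives $G\hookrightarrow\GL_{n,A}$, so the issue does not arise in that use.
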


In concrete applications, one modifies a torsor over $\bP_A^1$ to make it trivial on all closed $A$-fibres;
then the restriction to the infinity section identifies the descended torsor.

The following sectionwise constancy result was established by Panin--Stavrova \cite{PS25}, and reproved and mildly generalized by Fedorov--\v{C}esnavi\v{c}ius.
\begin{theorem}[\cite{CF23}*{Theorem~3.5}]
\label{thm:sectionwise-constancy}
Let $A$ be a semilocal ring and let $G$ be a reductive
$A$-group scheme. For a $G$-torsor $Q$ over $\Pj^1_A$
and any two sections $s_0,s_1\in\Pj^1_A(A)$, there is an
isomorphism
\[
 s_0^*Q\simeq s_1^*Q.
\]
\end{theorem}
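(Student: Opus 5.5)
The plan is to reduce to the case where the torsor is trivial along one section, and then use \Cref{prop:closed-fibre-descent} to descend a suitably modified torsor to $A$.

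\textbf{Twisting.} Put $E_0:=s_0^*Q$ and pass to the inner form $G':={}^{E_0}G$, which is again a reductive $A$-group. The twisting bijection turns $Q$ into a $G'$-torsor $Q'$ over $\Pj^1_A$ with $s_0^*Q'\simeq 1$. The claim then becomes $s_1^*Q'\simeq 1$. Since $G$ and $s_0,s_1$ are arbitrary, I will drop the primes and assume $s_0^*Q\simeq 1$.

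\textbf{Modification, descent and comparison.} The goal is a $G$-torsor $Q^\sharp$ over $\Pj^1_A$ with three properties:
\begin{itemize}
\item[(a)] $Q^\sharp$ agrees with $Q$ on a neighbourhood of $s_0\cup s_1$;
\item[(b)] $Q^\sharp|_{\Pj^1_{k_\fm}}$ is trivial for every maximal ideal $\fm\subset A$.
\end{itemize}
By \Cref{prop:closed-fibre-descent}, (b) means $Q^\sharp$ descends to a $G$-torsor $E'$ over $A$. Restricting along $s_0$ gives $E'\simeq s_0^*Q^\sharp\simeq s_0^*Q\simeq1$. Restricting along $s_1$ gives $s_1^*Q\simeq s_1^*Q^\sharp\simeq E'\simeq 1$, which is what we want.

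\textbf{Producing $Q^\sharp$.}
\begin{enumerate}
\item First I choose an auxiliary section $t\in\Pj^1_A(A)$ that is disjoint from $s_0$ and $s_1$ on every closed fibre. When all residue fields have enough elements, $A$ being semilocal and the Chinese remainder theorem suffice. For small finite residue fields I would instead take $t$ to be a finite étale relative divisor $T\subset\Pj^1_A$ whose fibres miss $s_0,s_1$. Such divisors exist in all degrees by choosing closed points of suitable degree on each fibre and lifting.
\item Next I describe the closed fibres. Over a field $k=k_\fm$, a $G_k$-torsor on $\Pj^1_k$ that is trivial at a rational point is, by the Grothendieck--Harder classification in the form of Gille, obtained from the line bundle $\cO(-1)$ through a cocharacter $\lambda_\fm\colon\Gm\to G_k$. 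Equivalently, it is trivial on $\Pj^1_k\setminus t_k$ and is given by a loop $\lambda_\fm(u)$ in the Beauville--Laszlo gluing at $t_k$.
\item Then I lift all the $\lambda_\fm$ simultaneously to a single cocharacter $\lambda\colon\Gm\to G$ defined over $A$, or at least over the formal neighbourhood of $T$. The scheme of cocharacters, or of maximal tori together with cocharacters, is smooth. Over a semilocal base such lifting is possible after first choosing a maximal torus of $G$ lifting chosen maximal tori on the closed fibres.
\item Finally I define $Q^\sharp$ by Beauville--Laszlo regluing $Q$ along $T$ using $\lambda(u)^{-1}$. This is an elementary modification supported on $T$, so it leaves $Q$ unchanged near $s_0\cup s_1$, giving (a), and it kills the closed-fibre twist, giving (b).
\end{enumerate}

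\textbf{Main obstacle.} I expect the hardest step to be step 3 together with the requirement that the modification really trivializes every closed fibre at once. Two points need care. The cocharacters $\lambda_\fm$ are only well defined up to conjugacy and up to the choice of a trivialization away from $t$. The lifting also has to be compatible with the given $Q$ over the punctured formal neighbourhood of $T$. My first attempt would be to trivialize $Q$ over the semilocal scheme formed by the closed points of $T$ and their neighbourhood, using that torsors over semilocal rings lift from the closed points in the Henselian case and the one-dimensional case of Grothendieck--Serre (the case $\dim R\le 1$ in \S\ref{history}). After that I would work entirely inside a split maximal torus over a finite étale cover of $A$. If $\lambda$ is only obtained over such a cover, I would recover the statement over $A$ by a Weil restriction argument.
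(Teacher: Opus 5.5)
Your skeleton is the right one: twist so that $s_0^*Q\simeq1$, modify $Q$ away from $s_0\cup s_1$ until every closed fibre is trivial, then descend by \Cref{prop:closed-fibre-descent}. The paper only cites the theorem from \cite{CF23}, but this is also the mechanism of \Cref{lem:etale-modification} and \Cref{prop:power}. The gap is in how you produce $Q^\sharp$: a regluing along a section by one cocharacter cannot work, for three reasons.

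\emph{The $\lambda_\fm$ need not lift.} Take $p$ odd, $D=(-1,-1)$ over $A=\mathbb Z_{(p)}$, and $G=\mathrm{SL}_1(D)$. Then $G_{\mathbb F_p}\simeq\mathrm{SL}_2$, but $G$ has no nontrivial cocharacter over $A$, because $D_{\mathbb Q}$ is a division algebra. Worse, $x_0^2+x_1^2+x_2^2+x_3^2=1$ with $x_\nu\in A\lps{u}$ forces $x_\nu\in A\fps{u}$. Hence $G(A\lps{u})=G(A\fps{u})$, and no regluing along a section changes $Q$ at all. Torsors trivial at $0,\infty$ with nontrivial closed fibre do exist: glue along $V(z^2+1)$, over which $D$ splits.

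\emph{Closed-fibre types can differ.} Even for split $G$ over a connected $A$ with maximal ideals $\fm_1\neq\fm_2$, the type of a cocharacter is locally constant. Take $c\in\fm_2\setminus\fm_1$ and the extension of $\OO(1)$ by $\OO(-1)$ with class $c\in H^1(\Pj^1_A,\OO(-2))\simeq A$. This is an $\mathrm{SL}_2$-torsor that is trivial over $k_{\fm_1}$ but of type $\OO(1)\oplus\OO(-1)$ over $k_{\fm_2}$. So no single $\lambda$ lifts both $\lambda_{\fm_i}$.

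\emph{Frames do not match.} In a global frame near $T$, the closed-fibre transition functions are $a_\fm\lambda_\fm(u)b_\fm$ with $b_\fm$ integral. Right multiplication by $\lambda(u)^{-1}$ leaves the class of $\lambda_\fm(u)b_\fm\lambda_\fm(u)^{-1}$, which need not be trivial.

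Your Weil-restriction fallback does not help. Over a finite \'etale $A'$ you only get $(s_0^*Q)_{A'}\simeq(s_1^*Q)_{A'}$, and $\Hone(A,G)\to\Hone(A',G)$ is not injective. Your regluing also presupposes $Q|_T\simeq1$, and for a section $t$ this is $t^*Q\simeq1$, an instance of the theorem itself. Neither the Henselian case nor the case $\dim\le1$ applies to an arbitrary semilocal $A$. Triviality at closed points is not enough either: $D$ is a nonsplit Azumaya algebra over $\mathbb Z_{(p)}$ with split reduction.

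What is missing is a way to prescribe the correction independently on each closed fibre. That is the loop-group surjectivity
\[
 G(B\lps{y})/G(B\fps{y})\twoheadrightarrow\prod_{\fm}G(B_\fm\lps{y})/G(B_\fm\fps{y}),
\]
which rests on Borel--Tits, needs $G_B$ isotropic, and also absorbs the factors $b_\fm$. So the carrier must be a finite \'etale $Y=\Spec B\subset\Gm_A$ over which $G$ becomes quasi-split, as in \Cref{lem:carrier}, not a section. Such a $Y$ typically has no rational points on the closed fibres, so $\lambda_{\fm*}\OO(-1)^\times$ need not be trivial off $Y_{k_\fm}$.

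The repair runs as follows. Compare $s_0$ and $s_1$ with an auxiliary section disjoint from both; one exists because $|\Pj^1(k_\fm)|\ge3$, so your small-field detour is unnecessary. Move each disjoint pair to $(0,\infty)$ by $\mathrm{GL}_2(A)$, and replace $Q$ by $\varphi_d^*Q$ with $d=\deg Y$. This does not change $0^*Q$ or $\infty^*Q$, and it turns the closed fibres into $\lambda_{\fm*}\OO(-d)^\times$, which is trivial off $Y_{k_\fm}$. Triviality of $\varphi_d^*Q$ along $Y$ is then bootstrapped: first modify over $\Pj^1_B$ along $\infty_B$, then pull back along the $B$-point $Y\hookrightarrow\A^1_A$. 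This is exactly the proof of \Cref{prop:power} for simple simply connected $G$.
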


\section{Completing a family over a relative curve}\label{sec:completion}

This section containes the key construction, \Cref{prop:graph}, that will complete the proof.

We begin with the following generalization of Milnor's patching theorem to torsors.

\begin{proposition}\label{prop:milnor-gluing}
Let $A_1\to A_0\leftarrow A_2$ be a diagram of $R$-algebras
in which one of the maps is surjective, and set
$A=A_1\times_{A_0}A_2$.
For every algebraic $R$-stack $\mathcal X$ with
quasi-separated diagonal, restriction induces an equivalence
\[
 \mathcal X(A)\xrightarrow{\sim}
 \mathcal X(A_1)\times_{\mathcal X(A_0)}\mathcal X(A_2).
\]
In particular, if $G$ is a flat, locally finitely presented,
quasi-separated group algebraic space over $R$, then
\[
 \mathbf{B}G(A)\xrightarrow{\sim}
 \mathbf{B}G(A_1)\times_{\mathbf{B}G(A_0)}\mathbf{B}G(A_2).
\]
\end{proposition}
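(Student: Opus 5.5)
This is a Milnor patching statement for stacks. The plan is to reduce it to the known pushout properties of Milnor squares: affine schemes first, then algebraic spaces, then stacks. The approach is essentially that of Bhatt's and Temkin--Tyomkin's results on Ferrand pushouts, and of the Tannakian algebraization results of Hall--Rydh, which yield \cite{SP}-type statements for Milnor squares.

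\textbf{Step 1: affine schemes and quasi-coherent sheaves.} Say $A_1\to A_0$ is surjective, so $A\to A_2$ is surjective too. Then $\Spec A$ is the pushout of $\Spec A_1\leftarrow\Spec A_0\to\Spec A_2$ in the category of schemes; this is Ferrand's pinching. Milnor's classical theorem then gives an equivalence between flat (or projective) $A$-modules and triples $(M_1,M_2,\alpha\colon M_1\otimes A_0\simeq M_2\otimes A_0)$ of flat modules. More generally, one has the corresponding statement for flat quasi-coherent algebras. Consequently, for $\mathcal X$ an affine scheme (or affine over $R$), the displayed map is a bijection. This reflects the fact that $A=A_1\times_{A_0}A_2$ as rings.

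\textbf{Step 2: passing to stacks.} Here I would use the theorem that for Milnor squares the pushout $\Spec A$ satisfies $\mathcal X(A)\simeq \mathcal X(A_1)\times_{\mathcal X(A_0)}\mathcal X(A_2)$ for every algebraic stack with quasi-separated diagonal. This can be cited as Bhatt's ``algebraization and Tannaka duality'' \cite{Bha16}-style result, or as Hall--Rydh's coherent Tannaka duality. If I want to avoid citing these, I would argue directly in two parts.

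(i) Full faithfulness. This reduces to the bijectivity statement for the Isom algebraic spaces, i.e.\ to $\mathcal X$ an algebraic space with quasi-separated diagonal. For those, one uses an étale cover by affines together with the fact that étale maps over $\Spec A$ correspond to compatible étale maps over the three pieces (the Milnor equivalence for étale algebras, via Step 1 and henselian-pair arguments). This reduces to the affine case.

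(ii) Essential surjectivity. Given compatible objects $x_1,x_2$, choose a smooth atlas $U\to\mathcal X$. Pull back to get smooth covers $U_i\to\Spec A_i$ agreeing over $A_0$. Glue them, using the affine smooth-algebra version of Milnor patching (Step 1 plus lifting of smooth algebras), into a smooth $U_A\to\Spec A$ with a map to $\mathcal X$. Then descend using full faithfulness, which was already established, applied to $U_A\times_A U_A$.

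\textbf{Step 3: the special case.} For $\mathcal X=\mathbf{B}G$ with $G$ flat, locally of finite presentation and quasi-separated, $\mathbf BG$ is algebraic by Artin's theorem for fppf torsors. Its diagonal is $G$-twisted and hence quasi-separated, so Step 2 applies.

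The main obstacle is Step 2(ii). One has to glue smooth covers, which requires lifting étale or smooth algebras along the surjection $A\to A_2$; this is essentially the content of Ferrand's pushout theorem. In practice I would cite Temkin--Tyomkin and Bhatt rather than reprove it.
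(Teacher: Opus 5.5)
Your overall route is the paper's. The equivalence is the universal property of $\Spec A$ as a $2$-pushout of $\Spec A_1\hookleftarrow\Spec A_0\to\Spec A_2$ among algebraic stacks with quasi-separated diagonal, and your Step~3 matches the paper's treatment of $\mathbf{B}G$. The gap is in Step~2, which carries all the content: the sources you name do not give it in the stated generality. The Tannaka-duality results you cite (Bhatt, Bhatt--Halpern-Leistner, Hall--Rydh) require at least affine stabilizers, and the coherent version also needs Noetherian hypotheses that $A_1\times_{A_0}A_2$ usually fails. Temkin--Tyomkin treat only algebraic spaces. The statement, however, allows any quasi-separated diagonal. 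For example, it covers $\mathbf{B}G$ for an abelian scheme $G$, which lies within the special case and where Tannakian reconstruction fails outright. The stack-level statement is \cite{AHHLR24}*{Theorem~4.2 and Lemma~4.5}, which is exactly what the paper invokes.

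Your fallback reduction has the right shape, but as written it only handles stacks with affine diagonal. In (ii), the pullbacks $U_i=\Spec A_i\times_{x_i,\mathcal X}U$ are merely quasi-separated algebraic spaces, so affine smooth-algebra patching cannot glue them. To glue them you need three things:
\begin{itemize}
\item the pushout $U_A=U_1\amalg_{U_0}U_2$ in algebraic spaces \cite{TT16};
\item the facts that $U_A$ is flat and locally of finite presentation over $\Spec A$ (hence a smooth cover) with $U_A\times_{\Spec A}\Spec A_i\simeq U_i$;
\item its universal property, to produce $U_A\to U$.
\end{itemize}
Likewise, you prove full faithfulness only over affine Milnor squares, but the descent step applies it over the non-affine $U_A\times_{\Spec A}U_A$ and its triple product. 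This works \'etale-locally, because a flat base change of a Milnor square is again a Milnor square, but it must be argued. The same non-affine issue arises in (i) when you glue \'etale covers of the Isom spaces. Finally, nothing is lifted along $A\to A_2$, and no henselian-pair argument enters (the pair $(A,\ker(A\to A_2))$ is not henselian in general). The gluing is by fibre products, with flatness and finite presentation descending along the square. With these inputs your argument goes through. Without them it proves the proposition only for affine diagonal, which does suffice for the affine (reductive) groups the paper ultimately feeds into Lemma~\ref{lem:dilation}.
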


\begin{proof}
We may assume that $A_1\to A_0$ is surjective.
The affine Ferrand square
\[
\begin{tikzcd}
 \Spec A_0 \arrow[r,hook] \arrow[d]
   & \Spec A_1 \arrow[d] \\
 \Spec A_2 \arrow[r,hook]
   & \Spec A
\end{tikzcd}
\]
is a geometric pushout. By
\cite{AHHLR24}*{Theorem~4.2 and Lemma~4.5},
it is a $2$-pushout in the $2$-category of algebraic stacks
with quasi-separated diagonals; see also  Temkin--Tyomkin \cite{TT16}*{Theorem~4.3} for the case of algebraic spaces. Its universal property gives
the first equivalence.
For the second, $\mathbf{B}G$ is algebraic by
\cite{SP}*{Tag~06PL}, and its diagonal is quasi-separated,
being fppf-locally a base change of $G\to\Spec R$.
\end{proof}

\begin{remark}\label{rem:milnor-gluing-nonflat}
Without flatness, the torsor equivalence can fail even for
affine group schemes of finite presentation.
Let $k$ be a field, set $R=k[t]$, and consider
\[
 G=\ker\bigl(t:\mathbb G_{a,R}\to\mathbb G_{a,R}\bigr)
   =\Spec R[z]/(tz),
 \qquad
 B=R\times_k R,
\]
where both maps $R\to k$ send $t$ to zero and $B$ has
the diagonal $R$-algebra structure.
For $C=R$ or $B$, multiplication by $t$ is injective
on every flat $C$-algebra $D$, so $G(D)=0$.
Thus every fppf $G_C$-torsor is uniquely trivial:
all transition functions on a trivializing cover are the identity.
On the other hand, $G_k=\mathbb G_{a,k}$, whence
\[
 \mathbf{B}G(B)\simeq *,
 \qquad
 \mathbf{B}G(R)\times_{\mathbf{B}G(k)}\mathbf{B}G(R)\simeq k_{\mathrm{disc}}.
\]
Here $c\in k$ records the translation identifying the two
trivial torsors over $k$. Only $c=0$ is effective.
\end{remark}

The following extension result by dilation is crucial to \Cref{prop:graph}.
\begin{lemma}\label{lem:dilation}
Let $A$ be a ring, let $f\in A$, and let $G$ be a flat,
locally finitely presented, quasi-separated group algebraic
space over $A$.
Let $Q$ be a $G$-torsor over $A_f[u]$, equipped with
a trivialization $e_0$ at $u=0$.
There exists an integer $m\geq0$ such that, for every
$r\geq1$ and every $N\geq m+r$, there are a $G$-torsor
$F_N$ over $A[v]$ and an isomorphism
\[
 \alpha_N:(F_N)_{A_f[v]}\xrightarrow{\sim}
 Q\times_{\Spec A_f[u],\,u\mapsto f^Nv}\Spec A_f[v].
\]
Moreover, $F_N$ admits trivializations at $v=0$ and over
$(A/f^r)[v]$ that agree over $A/f^r$, and the former
corresponds to $e_0$ under $\alpha_N$.
\end{lemma}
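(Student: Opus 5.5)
The plan is to make Quillen's dilation heuristic precise by spreading out. We \emph{find} finitely presented data for $Q$ over $A_f[u]$ in which all denominators are powers of $f$ of bounded order. We then observe that the substitution $u\mapsto f^Nv$ with $N\ge m+r$ clears these denominators in every positive-degree term, and that it leaves a remainder congruent to the ``$u=0$'' data modulo $f^{r}v$. Finally we glue using \Cref{prop:milnor-gluing}.

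\textbf{Step 1 (reduction to finite presentation).} The stack $\mathbf{B}G$ is locally of finite presentation, and $A_f[u]=\varinjlim A'_f[u]$ over finitely generated subrings $A'\subset A$ over which $G$ is defined. So we may descend $(Q,e_0)$ and assume $A$ Noetherian, with $G$ spread out over $A$. We may also replace $Q$ by $Q$ twisted by $e_0$: via $e_0$ we regard $Q$ as trivialized along $u=0$, so we may think of $Q$ as a torsor that is ``trivial to order $0$ in $u$''.

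\textbf{Step 2 (bounded denominators).} Choose an fppf (even étale, if $G$ is smooth; in general fppf) cover of $\Spec A_f[u]$ trivializing $Q$, together with the transition element $g\in G(W\times W)$. Take the cover of finite presentation, and compatible with $e_0$ along $u=0$, meaning that the restriction of $g$ to $u=0$ is the identity. By the limit formalism, the cover, and $g$ viewed as a morphism to $G$, are described by finitely many elements of $A_f[u]$ and of the coordinate rings of $W$. These can be written with denominators $f^{m}$ for a single $m$. Substituting $u=f^Nv$ with $N\ge m+r$, every coefficient of $v^i$ with $i\ge1$ becomes an element of $f^{N-m}A\subset f^rA$ times an $A$-integral expression. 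The $v$-constant terms are the ``$u=0$'' data, and these are trivial by the compatibility with $e_0$. Hence the dilated data are defined over a subring of the form $A+f^r v A[v]$ after clearing the $f$-denominators in the $v$-constant term, which is possible because that term is the identity.

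\textbf{Step 3 (gluing).} Use the Milnor square
\[
 A[v]\;=\;A_f[v]\times_{(A/f^r)_f[v]}\,\cdots
\]
This naive square degenerates because $(A/f^r)_f=0$, so instead the gluing will be done along $A[v]\to (A/f^r)[v]$. The dilated cocycle reduces modulo $f^r$ to the identity, by Step 2. Thus it defines, via \Cref{prop:milnor-gluing} applied to the square $A[v]\to (A/f^r)[v]$ together with the descended cover, a $G$-torsor $F_N$ over $A[v]$. Along with $F_N$ we get a trivialization over $(A/f^r)[v]$ and a trivialization at $v=0$ coming from $e_0$. These two trivializations agree over $A/f^r$ because both are induced by the identity cocycle. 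Inverting $f$ then gives $\alpha_N$.

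\textbf{Main obstacle.} The hard step is the second half of Step 2: the trivializing fppf cover $W$ itself must extend to a flat cover over $A[v]$ after dilation, not only the cocycle. The first thing I would try is to avoid covers altogether and work with the morphism $\Spec A_f[u]\to\mathbf{B}G$. I would factor it through an atlas, say a presentation $\mathbf{B}G=[X/\GL_n]$ when $G$ is affine, or an algebraic space atlas in general. Then the ``bounded denominator plus dilation'' argument is applied to a morphism into the finitely presented algebraic space $X$. On that morphism the substitution $u\mapsto f^Nv$ acts by clearing denominators coordinatewise, and one shows it extends over $A[v]$ because its value at $v=0$ is the given integral point. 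Flatness and quasi-separatedness of $G$ enter exactly through \Cref{prop:milnor-gluing}, as \Cref{rem:milnor-gluing-nonflat} indicates.
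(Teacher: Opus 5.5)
There is a genuine gap, and it sits exactly at the point you call the main obstacle.

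\textbf{Step~2.} The $v$-constant part of your dilated data is not ``trivial''. The cover $W\to\Spec A_f[u]$ restricts at $u=0$ to an fppf cover of $\Spec A_f$, whose coordinate ring still has coefficients in $A_f$. Compatibility with $e_0$ can at best make the \emph{cocycle} equal to $1$ at $u=0$. It does nothing to make the \emph{cover} defined over $A$. So after $u\mapsto f^Nv$ you still have no flat cover of $\Spec A[v]$ on which your cocycle lives.

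\textbf{The atlas workaround.} This does not repair the problem. A map $\Spec A_f[u]\to\mathbf{B}G=[X/\GL_n]$ lifts to the atlas $X$ only if the associated $\GL_n$-torsor on $A_f[u]$ is trivial, which fails in general. For a general smooth atlas, lifts exist only locally, which brings the cover back.

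\textbf{Step~3.} There is nothing to glue with. No Milnor square expresses $A[v]$ in terms of $A_f[v]$ and $(A/f^r)[v]$; your own remark that $(A/f^r)_f=0$ shows the ``naive square'' is not a fibre-product presentation. So \Cref{prop:milnor-gluing} does not apply as written. A cocycle on a cover of $\Spec A_f[v]$ does not by itself produce a torsor on $A[v]$.

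\textbf{What is missing.} Use Milnor gluing at the \emph{start}, to build $e_0$ into the base ring. Then let limit preservation bound the denominators of the torsor itself, rather than of explicit cover data.

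Put $C=A_f[u]\times_{A_f}A$, via evaluation at $u=0$, which is a surjection. By \Cref{prop:milnor-gluing}, $Q$ and the trivial torsor over $A$ glue along $e_0$ to a $G$-torsor $P$ over $C$.

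Since $C\cong A\oplus uA_f[u]$, the maps $w_j\mapsto(f^{-j}u,0)$ identify $C\simeq\varinjlim_j A[w_j]$, with transition maps $w_j\mapsto fw_{j+1}$. Because $\mathbf{B}G$ is locally of finite presentation, $P$ descends to a torsor $F$ over some $A[w_m]$. Its trivialization at $w_m=0$ is inherited from the trivial torsor over $A$, so it is integral by construction. This is precisely what your cover could not provide.

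Pull $F$ back along $w_m\mapsto f^{N-m}v$. This gives a torsor $F_N$ over $A[v]$ whose restriction to $A_f[v]$ is the pullback of $Q$ along $u\mapsto f^Nv$. Since $N-m\ge r$, the substitution factors through $w_m=0$ modulo $f^r$. That yields both required trivializations and their agreement over $A/f^r$.

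No covers, cocycles, or Noetherian reduction are needed. The Noetherian reduction would in any case be delicate: $G$ is only locally of finite presentation and need not descend to a finitely generated subring of $A$.
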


\begin{proof}
Set $C=A_f[u]\times_{A_f}A$, using evaluation at $u=0$.
By Proposition~\ref{prop:milnor-gluing}, the trivialization
$e_0$ glues $Q$ and the trivial torsor over $A$
to a $G$-torsor $P$ over $C$.

The maps
$
 A[w_j]\longrightarrow C,
 a\longmapsto(a/1,a),
 w_j\longmapsto(f^{-j}u,0)
$
identify
\[
 C\simeq\varinjlim_{j\geq0}A[w_j],
 \qquad w_j\longmapsto f w_{j+1}.
\]
Indeed, the degree-zero part remains $A$, whereas each
positive-degree part is localized at $f$.
Since $\mathbf{B}G$ is locally of finite presentation over $A$,
it is limit preserving \cite[Tag~0CMX]{SP}.
Hence $P$ descends to a $G$-torsor $F$ over some
$A[w_m]$, together with an isomorphism $F_C\simeq P$.
Writing $w=w_m$, restriction along $C\to A$ supplies
a trivialization $e$ of $F|_{w=0}$, while localization gives
\[
 F_{A_f[w]}\simeq
 Q\times_{\Spec A_f[u],\,u\mapsto f^mw}\Spec A_f[w],
\]
compatibly with $e$ and $e_0$.

For $N\geq m+r$, define
$
 F_N=
 F\times_{\Spec A[w],\,w\mapsto f^{N-m}v}\Spec A[v].
$
Its localization is the required pullback of $Q$.
Modulo $f^r$, the defining substitution factors through
$w=0$. Thus $e$ induces both asserted trivializations,
with the required compatibility.
\end{proof}

To apply dilation along the boundary of a curve, we need two
disjoint divisors: one containing the marked section and the
finite exceptional locus, the other containing the boundary.

\begin{lemma}\label{lem:disjoint-divisors}
Let $R$ be a Noetherian semilocal ring. Let
$C\subset\A^1_R$ be an affine open subscheme with a section
$s\in C(R)$, and let $Z\subset C$ be an $R$-finite closed
subscheme. There are disjoint relative effective Cartier
divisors $H,D\subset\Pj^1_R$, finite locally free over $R$,
such that
\[
 Z\cup s\subset H\subset C,\qquad
 |\Pj^1_R\setminus C|\subset |D|,\qquad
 \infty_R\subset D,
\]
where the last inclusion is scheme-theoretic. Moreover, $H$
and $D$ are defined by homogeneous polynomials $F_H$ and
$F_D$ of positive degrees $h$ and $d$, respectively.
\end{lemma}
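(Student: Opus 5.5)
The plan is to reduce to the closed fibres. First we build the divisors over the finitely many residue fields $k_\fm$ of $R$. Then we lift the homogeneous equations to $R$ and check the required properties by Nakayama-type arguments.

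Set $T=\Pj^1_R\setminus C$. This is closed in $\Pj^1_R$, hence proper over $R$. It contains $\infty_R$, and $T\cap\A^1_R$ is closed in $\A^1_R$. Write $Z'=Z\cup s$, which is $R$-finite and lies in $C$.

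\textbf{Step 1 (fibres).} Fix a maximal ideal $\fm$. The fibre $C_{k_\fm}$ is a nonempty open of $\A^1_{k_\fm}$, since it contains $s(\fm)$. Its complement $T_{k_\fm}$ is therefore a finite set of closed points containing $\infty$. Also $Z'_{k_\fm}$ is a finite subscheme of $C_{k_\fm}$.

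Choose a homogeneous $\bar F_D\in k_\fm[x,y]$ of degree $d$ that vanishes on $T_{k_\fm}$, has $y\mid\bar F_D$, and does not vanish on $Z'_{k_\fm}$. For example, take $y$ times a product of the minimal polynomials of the finite points of $T_{k_\fm}$, homogenized. Next choose $\bar F_H$ of degree $h$ that cuts out a subscheme containing $Z'_{k_\fm}$, does not vanish at $\infty$, and has zero set disjoint from $T_{k_\fm}$. For instance, take a power of the product of the minimal polynomials of the points of $Z'_{k_\fm}$, homogenized.

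The degrees must be made uniform across the finitely many $\fm$. The plan is to multiply by suitable powers of auxiliary forms $\ell$ with $\ell(\infty)\neq0$ that avoid all relevant points. If $k_\fm$ is finite, such linear forms may not exist. In that case use irreducible forms of large coprime degrees, noting that monic irreducibles of every large degree exist. Coprime degrees let us reach any large common degree by taking products.

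\textbf{Step 2 (lifting).} By the Chinese remainder theorem, the map $R\to\prod_\fm k_\fm$ is surjective. Lift the forms to $F_D,F_H\in R[x,y]$ with $F_D=y\cdot G$.

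To ensure $Z'\subset V(F_H)$ scheme-theoretically, choose $F_H$ more carefully. Let $I\subset R[t]$ be the ideal of $Z'$ in $\A^1_R$. Since $Z'$ is $R$-finite, $R[t]/I$ is finite over $R$. Hence $t$ satisfies a monic polynomial $P\in I$, by Cayley--Hamilton. Take $F_H$ to be the homogenization of a monic multiple of $P$, adjusted by adding elements of $\fm$-multiples so that the closed fibres match the choices in Step 1.

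This adjustment is the main obstacle. The closed fibres of $P$ may have roots lying in $T_{k_\fm}$. I would try replacing $P$ by $P+Q\cdot g$, where $g\in I$ and $Q$ is chosen via prime avoidance and the Chinese remainder theorem. Concretely, pick $g\in I$ whose reduction avoids the finitely many points of $T_{k_\fm}$ on each closed fibre. Such $g$ exists because $Z'_{k_\fm}\cap T_{k_\fm}=\emptyset$, using the surjection $R[t]\to\prod$ of finitely many residue rings. Then form a monic element of $I$ of large degree, $t^n g+(\text{small correction})$, or more simply $P^a+ g\cdot t^{b}$ with suitable degrees, so that it is monic and nonvanishing on $T$ over closed fibres.

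\textbf{Step 3 (verification).} Since $F_H$ and $F_D$ are monic in the appropriate coordinate, each has a unit leading coefficient in $x$, apart from the factor $y$. So $V(F_H)$ and $V(F_D)$ are finite locally free over $R$ and are relative effective Cartier divisors. By construction $\infty_R=V(y)\subset D$ scheme-theoretically.

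Disjointness, $H\subset C$, and $|T|\subset|D|$ are all statements about closed subsets that are proper over $R$. It therefore suffices to check them on closed fibres, because any nonempty closed subset proper over semilocal $R$ meets a closed fibre. The fibre check is exactly Step 1.

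For $|T|\subset|D|$, argue via the complement. The set $T\setminus D$ is closed in the affine scheme $\Pj^1_R\setminus D$, which is finite-type affine but not proper. So instead consider $T\cap V(F_D)^c$ inside $V(y)^c=\A^1_R$. If the fibre version fails here, enlarge $\bar F_D$ to $y^e\cdot\prod$ of the minimal polynomials of the points of $T$ in $\A^1_R$. This uses that $T\cap\A^1_R$ is $R$-finite, because $T$ is closed in the proper $\Pj^1_R$ and misses… If that finiteness fails in general, I would fall back on choosing $F_D$ in the ideal of $T\cap\A^1_R$ via a monic element, as in Step 2.
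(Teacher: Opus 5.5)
\textbf{The gap: the containment $|\Pj^1_R\setminus C|\subset|D|$.} This is the step you flag but do not close. Your Step~1 choice of $\bar F_D$ and its CRT lift only control the closed fibres. Disjointness statements can be checked there, because $H\cap D$ and $H\cap T$ are closed and proper over $R$. A containment cannot: $T\setminus D$ is not closed, so it may live entirely over non-closed points of $\Spec R$.

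Your proposed repair assumes $T\cap\A^1_R$ is $R$-finite, and this is false. Take $R$ a DVR with uniformizer $\pi$, $C=\Spec R[t,(\pi t-1)^{-1}]$, and $s=\{t=0\}$. Then $T\cap\A^1_R=V(\pi t-1)\simeq\Spec R[\pi^{-1}]$ is not finite over $R$, and its closure meets $\infty$ on the closed fibre. No polynomial in $t$ with unit leading coefficient vanishes at $t=\pi^{-1}$, since that would make $\pi^{-1}$ integral over $R$. So both your Step~1 recipe and your fallback produce $F_D=y\cdot G$ with $G$ having unit leading coefficient in $x$, and every such $F_D$ misses the point $t=\pi^{-1}$. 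A valid choice here is $F_D=y(\pi x-y)$, whose second factor is not monic in $x$. The normalization itself must therefore be abandoned. With it go your Step~3 arguments that $D$ is a relative effective Cartier divisor and finite locally free, since both rest on that monicity.

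\textbf{The missing idea.} Work in $\Pj^1_R$ rather than $\A^1_R$. The reduced closed subscheme $B=(\Pj^1_R\setminus C)_{\mathrm{red}}\cup\infty_R$ is proper over $R$. It is also quasi-finite, because each fibre of $\Pj^1_R\setminus C$ misses $s$ and is therefore a proper closed subset of $\Pj^1_k$. Hence $B$ is $R$-finite, even though it does not lie in $\A^1_R$, and vanishing on it should be imposed through its ideal sheaf.

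The paper does exactly this. For $m\gg0$, Serre vanishing gives a surjection $H^0(\Pj^1_R,\mathcal I_B(m))\to H^0(H,\OO(m)|_H)$, using $B\cap H=\varnothing$. Since $H$ is affine semilocal, $\OO(m)|_H$ has a generator, and lifting it gives $F_D$ with $B\subset D$ scheme-theoretically and $D\cap H=\varnothing$. Then $D$ is relative effective Cartier by the fibrewise criterion, since $F_D$ is nonzero at $s$ on every fibre. It is finite locally free because it is proper and lies in the affine open $\Pj^1_R\setminus H$. In the example it is monic in $y/x$, not in $x/y$. The paper builds $F_H$ the same way, from $H^0(\Pj^1_R,\mathcal I_{Z\cup s}(n))\to H^0(B,\OO(n)|_B)$.

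\textbf{Your $H$ step.} Your monic-polynomial route for $H$ can be completed, but the step with $P^a+g\,t^b$ is only a sketch. One way to finish it: the monic degree-$n$ elements of $I$ form the coset $t^{n-\deg P}P+I_{<n}$. For $n\gg0$, the image of $I_{<n}$ in the product of the residue fields at the finitely many points of $T\cap\A^1_R$ on closed fibres is everything. So you can arrange $F\equiv1$ at those points.
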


\begin{proof}
Put $X=\Pj^1_R$, $T=Z\cup s$, and let $B$ be the closed
union of $(X\setminus C)_{\mathrm{red}}$ with $\infty_R$.
The reduced boundary $(X\setminus C)_{\mathrm{red}}$ is proper and quasi-finite over $R$,
since each fibre avoids $s$, hence finite. Thus $B$ is
$R$-finite, has support $X\setminus C$, and contains
$\infty_R$ scheme-theoretically even when $R$ is nonreduced.
The $R$-finite subscheme $T$ is disjoint from $B$.

For $n \gg 0$, Serre
vanishing gives a surjection
\[
 H^0(X,\mathcal I_T(n))\to
 H^0(B,\OO(n)|_B).
\]
Since $B$ is affine and semilocal, lift a generator of
$\OO(n)|_B$ to $F_H\in H^0(X,\mathcal I_T(n))$, and let
$H$ be its zero scheme. Then $T\subset H$ and
$H\cap B=\varnothing$. Since $F_H$ generates $\OO(n)$
along $\infty_R$, it is nonzero on every fibre. By the
fibrewise Cartier criterion, \SP{062Y}, $H$ is a
relative effective Cartier divisor. It is finite, being proper
over $R$ and closed in $\A^1_R$, and locally free of rank
$n$, since its fibres have degree $n$.

Likewise, for $m \gg 0$,
lift a generator through the surjection
\[
 H^0(X,\mathcal I_B(m))\to
 H^0(H,\OO(m)|_H)
\]
to obtain $F_D$ with zero scheme $D$ satisfying
$B\subset D$ and $D\cap H=\varnothing$.
Since $F_D$ generates $\OO(m)$ along $s\subset H$, the
same criterion makes $D$ a relative effective Cartier divisor.
It is proper over $R$ and closed in the affine open
$D_+(F_H)=X\setminus H$, hence finite locally free.
Taking $h=n$ and $d=m$ gives the assertion.
\end{proof}

Now we come to the key construction of this paper. We will choose a graph that passes through the zero section over
the exceptional locus and approaches the infinity section near
the boundary. The exponent in its equation will provide the
dilation required by Lemma~\ref{lem:dilation}.

\begin{proposition}\label{prop:graph}
Let $R$ be a Noetherian semilocal ring, and let $G$
be a flat,
locally finitely presented, quasi-separated group algebraic space over $R$.
Let $C\subset\A^1_R$, $s\in C(R)$, and $Z\subset C$ be
as in Lemma~\ref{lem:disjoint-divisors}. Suppose that $Q$ is a
$G$-torsor on an open subscheme $V\subset\Pj^1_C$ containing
\[
 \Pj^1_{C\setminus Z},\qquad 0_C,\qquad\infty_C,
\]
and equipped with isomorphisms $Q|_{0_C}\simeq E_C$ and
$Q|_{\infty_C}\simeq1$. Then there is a $G$-torsor
$\mathcal F$ over $\Pj^1_R$ such that
\[
 s^*\mathcal F\simeq s^*E_C,\qquad
 \infty_R^*\mathcal F\simeq1.
\]
If $G$ is reductive, then $s^*E_C$ is trivial.
\end{proposition}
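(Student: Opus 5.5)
Choose $H,D\subset\Pj^1_R$ with equations $F_H,F_D$ of degrees $h,d$ as in Lemma~\ref{lem:disjoint-divisors}. Set $C_0=\Pj^1_R\setminus D\subset C$, an affine open containing $H$. For $N\ge1$ consider the morphism
\[
 j_N=\varphi_N\circ[F_H^{d}:F_D^{h}]\colon\Pj^1_R\to\Pj^1_R ,
\]
and its graph $\Gamma_N\colon C_0\to\Pj^1_{C_0}\subset\Pj^1_C$. The graph lands in $V$. Over $C\setminus Z$ the whole fibre $\Pj^1$ lies in $V$. Over $Z\subset H$ the function $F_H$ vanishes, so $j_N$ takes the value $0$ there and $\Gamma_N$ passes through $0_C\subset V$. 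The last point needs a scheme-theoretic check: since $Z\subset H$, the coordinate $u=F_H^{dN}/F_D^{hN}$ vanishes on $Z$, so the graph over a neighbourhood of $Z$ lies in the open where $u$ is small. Concretely, $V$ contains an open neighbourhood $W$ of $0_C$, and the preimage $\Gamma_N^{-1}(V)\supset\Gamma_N^{-1}(W)$ contains $Z$ and $C_0\setminus Z$. So $\Gamma_N^*Q$ is a $G$-torsor on $C_0$. Since $s\subset H$, we get $s^*\Gamma_N^*Q\simeq s^*(Q|_{0_C})\simeq s^*E_C$.

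The next step is to extend $\Gamma_N^*Q$ across $D$ using the dilation Lemma~\ref{lem:dilation}. Let $U=\Pj^1_R\setminus H=\Spec A$, an affine open containing $D$, with $f=F_D^h/F_H^d\in A$. Then $A_f=\OO(U\setminus D)$ and $U\cap C_0=\Spec A_f$. On $U$ the map $j_1$ has coordinate $v_\infty=f$ near $\infty$, and $j_N$ has coordinate $f^N$. Over $U\cap C_0$ the graph therefore factors through the $\infty$-chart $\Spec A_f[u]\subset\Pj^1_{U\cap C_0}$, where $u=y/x$, via $u\mapsto f^N$. Fibres over $U\cap C_0$ avoid $H\supset Z$, so that entire chart lies in $V$. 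Restricting $Q$ gives a torsor over $A_f[u]$ together with the trivialization at $u=0$ coming from $Q|_{\infty_C}\simeq1$.

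Lemma~\ref{lem:dilation}, applied with $r=1$ and $N\ge m+1$, yields $F_N$ over $A[v]$. Its localization is the pullback along $u\mapsto f^Nv$, and it carries compatible trivializations at $v=0$ and over $(A/f)[v]$. Pulling back along $v\mapsto1$ gives a torsor $\mathcal G$ on $U$. Its restriction to $U\setminus D$ is $\Gamma_N^*Q|_{U\setminus D}$, and $\mathcal G|_{D}$ is trivialized, because $A/f$ cuts out $D$ up to nilpotents; in fact $f$ defines $hD$, which contains $D$. We glue $\mathcal G$ with $\Gamma_N^*Q$ along $U\cap C_0$ to get $\mathcal F$ on $U\cup C_0=\Pj^1_R$, using Zariski gluing of torsors. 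Then $s^*\mathcal F\simeq s^*E_C$. Moreover $\infty_R\subset D$, and the trivialization over $(A/f)[v]$ restricts to one over $\infty_R$, so $\infty_R^*\mathcal F\simeq1$.

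Finally, suppose $G$ is reductive. Theorem~\ref{thm:sectionwise-constancy} applied to $\mathcal F$ with the sections $s$ and $\infty_R$ gives $s^*E_C\simeq\infty_R^*\mathcal F\simeq1$.

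The main obstacle is the scheme-theoretic claim that the graph over $Z$ lands in $V$. $V$ is only known to contain the closed set $0_C$ and the fibres over $C\setminus Z$, so one must argue that $\Gamma_N^{-1}(V)$ is an open containing every point of $C_0$. It suffices to check this at points: a point of $Z$ maps to the point $0$ of its fibre, which lies in $0_C\subset V$. I would also verify that the compatibility of trivializations in Lemma~\ref{lem:dilation} matches the identification $Q|_{\infty_C}\simeq1$ used on the chart.
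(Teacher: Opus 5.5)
Your proposal is correct and follows the paper's proof essentially step for step. It uses the same map $j_N=[F_H^{dN}:F_D^{hN}]$ (your $\varphi_N\circ[F_H^d:F_D^h]$), pulls back along its graph over $C_0=\Pj^1_R\setminus D$, and applies dilation over $A=\mathcal O(\Pj^1_R\setminus H)$ with $f=F_D^h/F_H^d$ and $r=1$. It then specializes at $v=1$, glues on the cover $\{\Pj^1_R\setminus H, C_0\}$, and concludes by sectionwise constancy.

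The ``scheme-theoretic'' worry about the graph landing in $V$ is only a pointwise check, as you yourself conclude. You should also use different letters for the coordinate at $0$ and the coordinate at $\infty$, since you currently call both $u$.
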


\begin{proof}
Choose $H,D,F_H,F_D,h,d$ as in
Lemma~\ref{lem:disjoint-divisors}, and write $t$ for the
auxiliary coordinate of $\Pj^1_C$, to distinguish it from the
coordinate on the curve. For $N>0$, we consider the morphism
\[
 j_N=[F_H^{dN}:F_D^{hN}]:\Pj^1_R\to\Pj^1_R
\]
that sends $H$ to $0_R$ and $D$ to $\infty_R$, scheme-theoretically.

Put $C_0=\Pj^1_R\setminus D\subset C$. The graph of $j_N$
over $C_0$ lies in $V$: over $C\setminus Z$, the open
$V$ contains the projective line, and over $Z\subset H$,
the graph factors through $0_C$. Pulling back $Q$ along
this graph gives a torsor $E_N$ over $C_0$. Since
$s\subset H\subset C_0$, the prescribed comparison at zero
gives
$
 s^*E_N\simeq s^*E_C.
$

Write  $\Spec A:=\Pj^1_R\setminus H$,
and put
$
 f=F_D^h/F_H^d\in A.
$
Then $f$ is a nonzerodivisor as it defines $hD$, and
\[
 \Spec A_f=\Pj^1_R\setminus(H\cup D)
          =(\Pj^1_R\setminus H)\cap C_0.
\]
This open lies in $C\setminus Z$, so the restriction of $Q$
is defined on the projective line over $A_f$. Its restriction
to the infinity chart, with coordinate $u=t^{-1}$, is a
$G$-torsor $P$ over $A_f[u]$ with the given trivialization
at $u=0$. On $\Spec A_f$, the graph is given by
$
 u=F_D^{hN}/F_H^{dN}=f^N.
$
For sufficiently large $N$, Lemma~\ref{lem:dilation} extends
$P(u=f^Nv)$ to a torsor over $A[v]$ with a trivialization
modulo $f$. Its restriction at $v=1$ gives a torsor $F_N$
over $A$, a trivialization modulo $f$, and an isomorphism
$
 (F_N)_{A_f}\simeq(E_N)_{A_f}.
$

Since $H\cap D=\varnothing$, the opens
$\Pj^1_R\setminus H$ and $C_0$ cover $\Pj^1_R$.
Gluing $F_N$ and $E_N$ along this isomorphism gives a
$G$-torsor $\mathcal F$ with $s^*\mathcal F\simeq s^*E_C$.
The section $\infty_R$ is contained in $D\subset\Spec A$,
so its map $A\to R$ kills $f$. Thus the trivialization
modulo $f$ induces one of $\infty_R^*\mathcal F$.
If $G$ is reductive, sectionwise constancy
(Theorem~\ref{thm:sectionwise-constancy}) identifies
$s^*\mathcal F$ with $\infty_R^*\mathcal F$, so
$s^*E_C$ is trivial.
\end{proof}

\section{Finite \'{e}tale divisors on the projective line and power pullback}\label{sec:power-maps}

Let $G$ be a simple, simply
connected group scheme over a semilocal ring $A$. The main result of this section is \Cref{prop:power}, which states that, after pullback along a power map $\varphi_d: \bP^1_A \to \bP_A^1$, $[x:y]\mapsto [x^d:y^d]$, every $G$-torsor over the projective line $\bP_A^1$ that is trivial along the infinity section is trivial away from a finite \'etale divisor contained in $\bG_{m, A}$.

We begin with the following modification argument.

\begin{lemma}\label{lem:etale-modification}
Let $A$ be a semilocal ring, let $G$ be a simple, simply
connected $A$-group scheme, and let $Y\subset\Pj^1_A$ be a
finite \'etale divisor cut out by a monic polynomial of positive
degree in an affine chart. Suppose that $G_Y$ is
isotropic in the sense that it has a $Y$-fibrewise proper parabolic $Y$-subgroup. Let $Q$ be a $G$-torsor over $\Pj^1_A$ such that
$Q|_Y$ is trivial and, for every maximal ideal $\fm$ of
$A$, the torsor $Q_{k_{\fm}}$ is trivial on $\Pj^1_{k_{\fm}}\setminus Y_{k_{\fm}}$.
There are a $G$-torsor $Q'$ over $\Pj^1_A$, pulled back
from $A$, and an isomorphism
\[
 Q'|_{\Pj^1_A\setminus Y}\simeq Q|_{\Pj^1_A\setminus Y}
\]
such that $Q'|_Y$ is trivial.
\end{lemma}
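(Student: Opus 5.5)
The plan is to modify $Q$ along $Y$ by regluing. We find $g\in G\bigl(\widehat{\OO}_{\Pj^1_A,Y}[\tfrac1{F_Y}]\bigr)$, or rather an element of $G$ over the punctured henselian (or formal) neighbourhood of $Y$. Regluing $Q|_{\Pj^1_A\setminus Y}$ with the trivial torsor near $Y$ via $g$ produces $Q'$ that still agrees with $Q$ away from $Y$ and is still trivial on $Y$. The regluing uses Beauville--Laszlo or formal gluing, or equivalently \Cref{prop:milnor-gluing} applied to the affine square formed by $\Pj^1_A\setminus Y$, the henselization along $Y$, and its punctured version.

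We then arrange that $Q'$ restricted to every closed fibre $\Pj^1_{k_\fm}$ is trivial. By \Cref{prop:closed-fibre-descent}, $Q'$ then descends to $A$, i.e.\ is pulled back from $A$.

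\textbf{Step 1 (closed fibres).} Fix $\fm$. Since $Q_{k_\fm}$ is trivial on $\Pj^1_{k_\fm}\setminus Y_{k_\fm}$ and on $Y_{k_\fm}$, and hence on the henselization along $Y_{k_\fm}$ by Hensel lifting of sections of the smooth torsor, $Q_{k_\fm}$ is described by a double coset. This double coset lies in
\[
G\bigl(k_\fm[\Pj^1\setminus Y]\bigr)\backslash G\bigl(\mathcal K_{Y_{k_\fm}}\bigr)/G\bigl(\mathcal O^h_{Y_{k_\fm}}\bigr),
\]
where $\mathcal K$ denotes the punctured neighbourhood. The class is represented by some $g_\fm$.

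\textbf{Step 2 (isotropy kills the class).} The needed input is that, because $G_Y$ is isotropic and simply connected, $G(\mathcal K_Y)$ is generated, up to $G(\mathcal O^h_Y)$, by the elementary subgroup $E_P$ generated by unipotent radicals $U_{P}, U_{P^-}$ of the given parabolic and its opposite. This is a Margaux/Gille-type decomposition for simply connected isotropic groups over a semilocal ring: in the closed fibre we work over a product of fields and use Kneser--Tits for isotropic simply connected groups. The unipotent radicals are successive extensions of vector groups, so their points over $\mathcal K_Y$ can be approximated. Every element of $U_P(\mathcal K_Y)$ decomposes as a product of an element of $U_P(\mathcal O_Y^h)$ and an element extending over $\Pj^1\setminus Y$. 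This holds because $U_P$ is an extension of vector bundles over $Y$, and for a vector bundle $\mathcal V$ on the finite étale $Y$ we have $\mathcal K_Y/\mathcal O^h_Y\simeq (A[\Pj^1\setminus Y]\text{-polar parts})$. Those polar parts are realized globally because $Y$ is cut out by a monic polynomial in an affine chart: partial fractions, Weil restriction along $Y\to A$.

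Hence we can choose the modifying element $g\in E_P(\mathcal K_Y)$ over $A$ lifting the fibrewise elements $g_\fm^{-1}$ modulo the elementary parts. Lifting from the closed fibres to $A$ is possible since $E_P(\mathcal K_Y)\to\prod_\fm E_P(\mathcal K_{Y_{k_\fm}})$ is surjective: unipotent points lift, because $A$ is semilocal and the vector-group coordinates lift via CRT.

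\textbf{Step 3.} Reglue with $g$ to get $Q'$. By construction each $Q'_{k_\fm}$ is trivial. $Q'|_Y$ is trivial since the gluing is with the trivial torsor near $Y$. We conclude by \Cref{prop:closed-fibre-descent}.

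The main obstacle is Step 2. It needs the Kneser--Tits/Whitehead statement that, over the closed fibres, $G(\mathcal K)$ equals $E_P(\mathcal K)\,G(\mathcal O^h)$. This requires simple connectedness and isotropy of $G_Y$, not merely of $G_{\mathcal K}$, and also compatibility between fibres. I would try first to reduce via a Bruhat-type/Gauss decomposition over the semilocal ring $\mathcal O^h_{Y_{k_\fm}}$: its big cell $U_{P^-}L U_P$ is dense, and $L$ is absorbed by induction on semisimple rank.
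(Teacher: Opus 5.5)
Your proposal is correct and follows essentially the same route as the paper: you reglue $Q|_{\Pj^1_A\setminus Y}$ with the trivial torsor on a neighbourhood of $Y$ (formal in the paper, henselian in yours) via an element of $G$ over the punctured neighbourhood chosen to kill each closed-fibre class, and then conclude by \Cref{prop:closed-fibre-descent}. The paper simply cites the loop-group surjectivity $G(B\lps{y})/G(B\fps{y})\twoheadrightarrow\prod_{\fm}G(B_{\fm}\lps{y})/G(B_{\fm}\fps{y})$ from \v{C}esnavi\v{c}ius, whose proof is essentially your Step~2; three small corrections: the needed input there is the Borel--Tits/Gille decomposition $G(\mathcal K)=E_P(\mathcal K)\,G(\mathcal O)$ rather than Kneser--Tits proper (which fails in general), your partial-fractions aside about splitting $U_P(\mathcal K_Y)$ is unnecessary (and ill-posed, since $P$ lives only near $Y$), and \Cref{prop:milnor-gluing} does not apply to your gluing square, so use formal or Nisnevich gluing instead.
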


\begin{proof}
Write $Y=V(h)$ in an affine chart $\Spec A[t]$, and put
$B=A[t]/(h)$.
We will modify $Q$ along $Y$ to obtain the desired $Q'$ so that
it is trivial along all closed $A$-fibres.

By formal \'etaleness, the completion of $\bP_A^1$
along $Y$ has coordinate ring $B\fps{y}$, where $y:=h$. Put
$
 D=\Spec B\fps{y}.
$
By smoothness, any section of $Q|_Y$ lifts to a section of $Q|_D$, so the latter is also trivial.
For every maximal ideal $\fm$ of $A$, put
$B_{\fm}=B\otimes_A k_{\fm}$.
The ring $B$ is semilocal, and $G_B$ is isotropic, simple,
and simply connected, so the following loop-group surjectivity applies:
\[
 G(B\lps{y})/G(B\fps{y})
 \twoheadrightarrow
 \prod_{\fm\in\Max A}
 G(B_{\fm}\lps{y})/G(B_{\fm}\fps{y}),
\]
see \cite{Ces22a}*{(2) in the proof of Proposition~8.4} (the
essential input here is the Borel--Tits theorem \cite{Gil09}*{Fait~4.3, Lemme~4.5}; implicitly, it is contained in \cite{FP15}, and  also very close to \cite{Fed16}*{Proposition~7.1}). By Beauville--Laszlo type formal gluing for $G$-torsors \cite{BC22}*{Lemma~2.2.11~(b)}
or \cite{Fed16}*{Proposition~4.4}, this surjectivity implies that every $G$-torsor over $\bigsqcup_{\fm} \bP_{k_{\fm}}^1$ that is obtained by gluing $Q|_{\bigsqcup_{\fm}\bP^1_{k_{\fm}}\backslash Y_{k_{\fm}}}$ and $ Q|_{\bigsqcup_{\fm}Y_{k_{\fm}}}$ lifts to a $G$-torsor over $\bP_A^1$ that is obtained by gluing $Q|_{\bP^1_A\backslash Y}$ with $Q|_D$. Hence, by our assumptions, $Q|_{\bP^1_A\backslash Y}$ extends to a $G$-torsor $Q'$ over $\bP_A^1$ such that both $Q'|_{Y}$ and $Q'|_{\bigsqcup_{\fm}\bP^1_{k_{\fm}}}$ are trivial. 
By Lemma~\ref{prop:closed-fibre-descent}, the latter triviality implies that $Q'$
descends to $A$.
\end{proof}

The next lemma constructs the divisor to which the modification lemma
will be applied.

\begin{lemma}\label{lem:carrier}
Let $R$ be a Noetherian semilocal ring, and let $G$
be a simple, simply connected $R$-group scheme. There are a
finite faithfully flat \'{e}tale $R$-scheme $Y$ and a closed
immersion $Y\hookrightarrow\Gm_R$ over $R$ such that $G_Y$ admits a
Borel subgroup. If $R$ is connected, the immersion is defined
by a monic polynomial of degree $\rk_R(Y)>0$ with unit
constant term.
\end{lemma}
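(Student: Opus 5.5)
We prove \Cref{lem:carrier} by first producing a finite étale cover that trivializes the Borel problem, and then embedding it into $\Gm_R$.

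\textbf{Step 1: the cover.} Let $\mathcal B\to\Spec R$ be the scheme of Borel subgroups of $G$. It is smooth and projective over $R$ with geometrically connected fibres, namely flag varieties. A Borel subgroup of $G_Y$ is the same as a point of $\mathcal B(Y)$, so it suffices to find a finite étale $Y$ with $\mathcal B(Y)\neq\varnothing$.

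Since $R$ is semilocal, it suffices to treat the residue fields at the finitely many maximal ideals $\fm$. For each $\fm$, the smooth $k_{\fm}$-variety $\mathcal B_{k_{\fm}}$ has a point over some finite separable extension of $k_{\fm}$. To make things uniform, we pick a closed point $b_{\fm}$ with separable residue field. For infinite $k_{\fm}$ this is the density of separable points on smooth varieties; for finite $k_{\fm}$ every extension is separable, and the degree can be enlarged at will.

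Next we cut $\mathcal B$ by hyperplane sections passing through the $b_{\fm}$ with transverse intersection, in the style of Gabber and Poonen. This yields a closed subscheme $Y\subset\mathcal B$ that is finite étale over $R$ and whose closed fibres are nonempty. The convenient way to arrange this is to work in the affine open $\mathcal B\setminus(\text{hyperplane})$, which is semilocal over $R$, and to lift equations from the closed fibres using the Chinese remainder theorem. Alternatively, one can use the standard existence of finite étale quasi-sections of smooth surjective morphisms over semilocal bases (EGA IV 17.16.3 combined with the semilocal refinement; see also \cite{Ces22a}). In either case the tautological point of $\mathcal B(Y)$ gives the Borel subgroup. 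Enlarging $Y$ by taking disjoint unions, we can also make the rank of $Y$ as large as we like and constant on each connected component of $\Spec R$.

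\textbf{Step 2: embedding into $\Gm_R$.} The algebra $\Gamma(Y,\OO_Y)$ is finite étale over the semilocal ring $R$, so it is monogenic once the residue fields are large enough relative to the rank. Concretely, we need a generator $\theta$ whose image in each $Y_{k_{\fm}}$ is a product of finite separable field extensions with pairwise distinct, nonzero minimal polynomials. For finite $k_{\fm}$ this is achieved by choosing $Y_{k_{\fm}}$ to have components of distinct degrees (again by enlarging the degrees of the points $b_{\fm}$), so that enough generators exist. We then lift $\theta$ by the Chinese remainder theorem, and Nakayama's lemma shows that $R[t]\to\Gamma(Y,\OO_Y)$, $t\mapsto\theta$, is surjective.

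Nonvanishing of $\theta$ on every fibre makes $\theta$ a unit, so the resulting immersion lands in $\Gm_R$. When $R$ is connected, $Y$ has constant rank $n$, and the characteristic polynomial of $\theta$ is monic of degree $n$. By Cayley--Hamilton this polynomial cuts out $Y$, since both sides are locally free of rank $n$ and there is a surjection between them. Its constant term is $\pm$ the norm of $\theta$, which is a unit.

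\textbf{Main obstacle.} The hard step is the case of small finite residue fields in Step 2. There, monogenicity forces us to control the splitting types of $Y_{k_{\fm}}$ carefully. I would handle this by choosing the closed points $b_{\fm}$ of large, distinct prime degrees. This is possible because a variety over a finite field has closed points of every sufficiently large degree, by Lang--Weil.
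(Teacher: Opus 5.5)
\textbf{There is a genuine gap in Step~2 when some residue field is a small finite field.}

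Your Step~1 produces $Y_{k_{\fm}}$ as a zero-dimensional complete intersection of high-degree hypersurfaces in $\mathcal B_{k_{\fm}}$ passing through $b_{\fm}$. Such an intersection contains, besides $b_{\fm}$, many other closed points, and you control none of their degrees. Choosing $b_{\fm}$ of large prime degree via Lang--Weil says nothing about those other points. For example, nothing in your construction prevents $Y_{k_{\fm}}$ from having at least $|k_{\fm}|$ rational points. In that case no closed immersion $Y_{k_{\fm}}\hookrightarrow\mathbb{G}_{m,k_{\fm}}$ exists, so no $\theta$ as in Step~2 exists.

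You also cannot discard the unwanted points. Over a non-Henselian semilocal $R$, a decomposition of the closed fibre of a finite \'etale $Y$ need not lift to a decomposition of $Y$. Enlarging $Y$ by disjoint unions only makes the point count worse. Forcing the complete intersection itself to avoid all points of small degree would need a Poonen-type sieve with conditions at far too many points to be routine.

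\textbf{The missing idea is Panin's finite-field trick, which the paper uses.}
\begin{enumerate}
\item Keep any finite \'etale $Y_0=\Spec B_0\subset\mathcal B$ from Step~1.
\item Pass to $Y_N=\Spec B_0[t]/(f_N)$. Here $f_N\in B_0[t]$ is monic of degree $N$, chosen by the Chinese remainder theorem to be irreducible modulo every maximal ideal of $B_0$ with finite residue field, and a product of distinct linear factors modulo the others.
\item Then $Y_N\to Y_0$ is finite \'etale, so $G_{Y_N}$ still has a Borel subgroup.
\item Every closed point of $(Y_N)_{k_{\fm}}$ has degree $N$ times the degree of a point of $(Y_0)_{k_{\fm}}$. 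So the number of points of each degree is bounded independently of $N$.
\item By contrast, $\mathbb{G}_{m,k_{\fm}}$ has roughly $q^e/e$ closed points of degree $e\ge N$, where $q=|k_{\fm}|$.
\item Hence for $N\gg0$ the closed fibres embed into $\mathbb{G}_m$. Your Chinese remainder, Nakayama and Cayley--Hamilton argument then goes through exactly as you wrote it.
\end{enumerate}

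\textbf{A smaller point in Step~1.} An affine open of $\mathcal B$ is not semilocal. Lifting equations in an affine chart need not give something finite over $R$, because components can escape to infinity. To get finiteness, lift homogeneous equations of a degree common to all $\fm$ in a projective embedding. The lift is then proper and quasi-finite over $R$, and \'etale along the closed fibres. The non-\'etale locus has closed image containing no closed point, hence is empty, so the lift is finite \'etale over $R$.
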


\begin{proof}
We pass to connected components to assume that $\Spec R$ is connected.
Let $\mathscr B$ be the smooth projective $R$-scheme parameterizing
Borel subgroups of $G$. It has constant relative dimension $r$ over $R$, because the types of geometric fibres of $G$ over $R$ are locally constant and $R$ is connected.  Choose an
$R$-projective embedding of $\mathscr B$. For every maximal
ideal $\fm \subset R$, the Bertini theorem
\cite{Ces22}*{Proposition~4.1.3} supplies $r$ hypersurfaces over $k_{\fm}$
whose intersection with $\mathscr B$ is nonempty, smooth,
and zero-dimensional. Part~(c) of that proposition allows
us to choose their degrees in common: successively, take a
sufficiently large common multiple of the finitely many
characteristic exponents. We lift the homogeneous coefficients
simultaneously by the Chinese remainder theorem. Then their common vanishing locus defines a faithfully flat $R$-finite \'etale closed subscheme
$Y_0:=\Spec B_0 \subset \mathscr B$ for which
$G_{Y_0}$ admits a Borel $Y_0$-subgroup: $Y_0$ is $R$-projective, and by the Jacobian criterion it is \'etale at all points over the closed points of $\Spec R$; hence it is finite \'etale and faithfully flat over $R$ (as the image of $Y_0\to \Spec R$ is clopen and $R$ is connected), and $G_{Y_0}$ admits a Borel $Y_0$-subgroup by construction.  

For each $N\geq1$,  by the Chinese remainder theorem we choose a monic
polynomial $f_N\in B_0[t]$ of degree $N$ whose reduction modulo every
maximal ideal $\fm\subset B_0$ is a product of $N$ distinct monic linear
factors if $k_{\fm}$ is infinite, and is irreducible if $k_{\fm}$ is finite.
This defines a finite \'etale cover $Y_N=\Spec B_N\twoheadrightarrow Y_0$, where
$B_N=B_0[t]/(f_N)$. By Panin's finite-field tricks
(see \cite{Ces22a}*{proof of Lemma~6.1}), for $N \gg 0$, there is no finite-field obstruction to embedding $Y_N$ into
$\Gm_R$:
for every maximal ideal $\fm\subset R$, the field factors of
$B_N\otimes_R k_{\fm}$ have degrees growing with $N$ and fixed
multiplicities if $k_{\fm}$ is finite; if $k_{\fm}$ is infinite,
the primitive element theorem gives such embeddings directly.

Fix such an $N$ and put $B=B_N$. The embeddings on the closed fibres
give units $a_{\fm}\in(B/\fm B)^\times$ with
$k_{\fm}[a_{\fm}]=B/\fm B$. Lift them simultaneously to $a\in B$ by
the Chinese remainder theorem. Since $B$ is $R$-finite, we have $a \in B^{\times}$, and Nakayama's lemma gives $B=R[a]$.
Thus $t\mapsto a$ defines a closed immersion
$Y=\Spec B\hookrightarrow\Gm_R$; the map $Y\to Y_0$ supplies a Borel
subgroup of $G_Y$.

Finally, $B$ is free of rank $d=\rk_R(Y)>0$. The characteristic
polynomial $h(t)$ of multiplication by $a$ is monic of degree $d$
with unit constant term. By Cayley--Hamilton, the map $R[t]\to B$
factors through a surjection $R[t]/(h)\to B$, which is an isomorphism
because both sides are locally free of rank $d$.
\end{proof}

\begin{proposition}\label{prop:power}
Let $R$ be a connected Noetherian semilocal ring, let $G$
be a simple, simply connected $R$-group scheme, and choose
$Y$ as in Lemma~\ref{lem:carrier}, of rank $d$. For every
semilocal $R$-algebra $A$ and every $G_A$-torsor $Q$ over
$\Pj^1_A$ with a given trivialization at the infinity section, the torsor
\[
 Q^{(d)}=\varphi_d^*Q,\qquad
 \varphi_d([x:y])=[x^d:y^d],
\]
admits a trivialization on $\Pj^1_A\setminus Y_A$ which
restricts to the given one at the infinity section.
\end{proposition}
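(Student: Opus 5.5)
The plan is to apply Lemma~\ref{lem:etale-modification} to the torsor $Q^{(d)}$ over $\Pj^1_A$ with divisor $Y_A\subset\Gm_A\subset\Pj^1_A$. Then I would use the trivialization at $\infty_A\notin Y_A$ to show that the resulting constant torsor $Q'$ is trivial.

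\textbf{Hypotheses on $Y_A$.} By Lemma~\ref{lem:carrier}, $Y_A$ is finite \'etale over $A$ and cut out by the base change of a monic polynomial $h$ of degree $d$ with unit constant term. Moreover, $G_{Y_A}$ has a Borel subgroup, hence a fibrewise proper parabolic, since $G$ is simple and so nontrivial on every fibre.

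\textbf{Triviality of $Q^{(d)}|_{Y_A}$.} Write $Y_A=\Spec B$ with $B$ finite over $A$, so $B$ is semilocal. The restriction $Q^{(d)}|_{Y_A}$ is the pullback of $Q_B$ along the section $a^d\in\Pj^1_A(B)$, where $a$ is the image of the coordinate. By Theorem~\ref{thm:sectionwise-constancy} over $B$, this pullback is isomorphic to $\infty_B^*Q_B$, which is trivial.

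\textbf{Closed fibres (main obstacle).} Fix a maximal ideal $\fm\subset A$ and set $k=k_{\fm}$. The torsor $Q_k$ on $\Pj^1_k$ is trivial at $\infty$. I would invoke the Grothendieck--Harder--Gille classification: such torsors are of the form $\lambda_*\OO(1)$, namely the torsor obtained from the $\Gm$-torsor of $\OO(1)$ via a cocharacter $\lambda\colon\Gm\to S\subset G_k$ of a maximal split torus $S$. Since $\varphi_d^*\OO(1)\simeq\OO(d)$, this gives $Q^{(d)}_k\simeq\lambda_*\OO(d)$. The divisor $Y_k$ has degree $d$, so $\OO(d)\simeq\OO(Y_k)$, and this line bundle is trivialized on $\Pj^1_k\setminus Y_k$ by the section $h$. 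Hence $Q^{(d)}_k$ is trivial off $Y_k$. The degree $d$ of the power map is chosen exactly so that this degree matching works. The delicate point is having the classification in the generality needed (simply connected, possibly anisotropic $G_k$, arbitrary $k$). If necessary, I would reduce to it by noting that any cocharacter class becomes a multiple of $d$ after $\varphi_d^*$, and that $\OO(d)$ is trivial off $Y_k$.

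\textbf{Conclusion.} Lemma~\ref{lem:etale-modification} now produces $Q'$ pulled back from some $G$-torsor $Q'_0$ over $A$, together with an isomorphism $Q'|_{\Pj^1_A\setminus Y_A}\simeq Q^{(d)}|_{\Pj^1_A\setminus Y_A}$. Since $\infty_A$ lies in $\Pj^1_A\setminus Y_A$, we get $Q'_0\simeq\infty_A^*Q'\simeq\infty_A^*Q^{(d)}\simeq1$, using that $\varphi_d$ fixes $\infty$. Thus $Q^{(d)}$ is trivial on $\Pj^1_A\setminus Y_A$. Any trivialization of it differs at $\infty_A$ from the given one by an element $g\in G(A)$. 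Translating the trivialization by the constant section $g^{-1}$ makes it restrict to the given one.
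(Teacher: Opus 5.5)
Your proof is correct and follows the paper's route. You use Gille's classification on the closed fibres to get $Q^{(d)}_{k_{\fm}}$ trivial off $Y_{k_{\fm}}$, then Lemma~\ref{lem:etale-modification} along $Y_A$, then triviality at $\infty_A$ and a constant adjustment by $G(A)$. The one difference is the triviality of $Q^{(d)}|_{Y_A}$: you get it directly from Theorem~\ref{thm:sectionwise-constancy} over the semilocal ring $B$, whereas the paper applies Lemma~\ref{lem:etale-modification} a second time over $\Pj^1_B$ with divisor $\infty_B$ (using the Borel subgroup of $G_B$) to trivialize $Q^{(d)}_{\A^1_B}$ and then pulls back along the tautological $B$-point. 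Your shortcut is valid and slightly more economical.
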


\begin{proof}
For every maximal ideal $\fm$ of $A$, by
\cite{Gil02}*{th\'eorème ~3.8} (see also
\cite{Ans18}*{Proposition~3.4})\textcolor{red}{,} $Q_{k_{\fm}}$ is the pushout
of $\OO(-1)^\times$ along a $k_{\fm}$-cocharacter of $G_{k_{\fm}}$.
Thus $Q^{(d)}_{k_{\fm}}$ is the pushout of $\OO(-d)^\times$.
Since $Y_{k_{\fm}}$ is an effective divisor of degree $d$, $\OO(-d)|_{\Pj^1_{k_{\fm}}\backslash Y_{k_{\fm}}}$ is trivial, so that $Q^{(d)}_{k_{\fm}}$ is trivial away from $Y_{k_{\fm}}$.

Write $Y_A=\Spec B$. The ring $B$ is semilocal and $G_B$
admits a Borel subgroup. By \emph{loc. cit. }, $Q^{(d)}_{\bA^1_B}$ is trivial along all closed $B$-fibres. We may consequently apply
Lemma~\ref{lem:etale-modification} with $Y=\infty_A \subset \bP_B^1$ to obtain a $G$-torsor over $\bP_B^1$ that descends to $A$, restricts to $Q^{(d)}_{\bA^1_B}$, and
 is trivial along the infinity section, so that $Q^{(d)}_{\bA^1_B}$ is trivial. Pulling
back along the $B$-point defined by
$Y_A\hookrightarrow\A^1_A$ implies that
$Q^{(d)}|_{Y_A}$ is trivial.

By Lemma~\ref{lem:etale-modification} again, $Q^{(d)}|_{\Pj^1_A\setminus Y_A}$ extends
 to a torsor $Q'$ over
$\Pj^1_A$ that descends to $A$. Since $Y_A \cap \infty_A=\emptyset$, $Q'|_{\infty_A}=Q^{(d)}_{\infty_A}$. Thus $Q'$ is trivial and so is $Q'|_{\bP^1_A\setminus Y_A}$; adjust
a trivialization by a constant element of $G(A)$ to obtain the prescribed
trivialization at the infinity section.
\end{proof}

\section{Modification near the vertical fibres}\label{sec:vertical-modification}

To apply the relative-curve construction in \Cref{prop:graph}, we need a trivialization away from a
base divisor that avoids the generic points of the vertical fibres. The power-map construction in \Cref{prop:power} supplies such a trivialization
near a finite set of height-one primes. It remains to spread out the resulting modified
family and then extend it across the remaining points of codimension two.

\begin{lemma}\label{lem:spread-power}
Let $R$ be a regular semilocal domain, let $G$ be a simple, simply
connected $R$-group, and let $\mathfrak p_1,\ldots,\mathfrak p_s$
be distinct height-one primes of $R$, with $s\geq1$. Put
$
 A=(R\setminus\textstyle\bigcup_i\mathfrak p_i)^{-1}R.
$
Let $Q$ be a $G$-torsor over an open $V\subset\Pj^1_R$ containing
$\Pj^1_A,0_R,\infty_R$, and fix a trivialization of
$Q|_{\infty_R}$. For the divisor $Y$ and integer $d$ of
Proposition~\ref{prop:power}, there is a nonzero
$f\notin\bigcup_i\mathfrak p_i$ such that
$
 \Pj^1_{R_f}\subset\varphi_d^{-1}(V)
$
and $\varphi_d^*Q$ admits a trivialization over
$\Pj^1_{R_f}\setminus Y_{R_f}$ whose restriction to the infinity section is the
given one.
\end{lemma}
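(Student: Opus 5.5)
The plan is to apply Proposition~\ref{prop:power} over the semilocal ring $A$ and then spread the result out to some $R_f$ using limit arguments.

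\emph{Step 1: work over $A$.} The ring $A$ is a semilocal $R$-algebra; it is the semilocalization of $R$ at $\mathfrak p_1,\ldots,\mathfrak p_s$. Since $\Pj^1_A\subset V$, the restriction $Q_A:=Q|_{\Pj^1_A}$ is a $G_A$-torsor on all of $\Pj^1_A$. It carries the induced trivialization at $\infty_A$. Proposition~\ref{prop:power}, applied with this $A$ (and with $R$ connected, being a domain), produces a trivialization
\[
\tau\colon \varphi_d^*Q_A|_{\Pj^1_A\setminus Y_A}\xrightarrow{\sim}1
\]
that restricts to the given one at $\infty_A$.

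\emph{Step 2: spread out the domain.} We write $A=\varinjlim_f R_f$ over $f\in R\setminus\bigcup_i\mathfrak p_i$. The complement $\Pj^1_R\setminus\varphi_d^{-1}(V)$ is closed, and its image in $\Spec R$ is closed because $\Pj^1_R\to\Spec R$ is proper. Call this image $W$. The image $W$ misses every prime of $A$, that is, every prime contained in $\bigcup_i\mathfrak p_i$. I would use prime avoidance on the finitely many minimal primes of $W$, none of which lies in $\bigcup\mathfrak p_i$. This gives an $f$ that lies in all of these minimal primes but in no $\mathfrak p_i$. Then $\Spec R_f\cap W=\varnothing$, so $\Pj^1_{R_f}\subset\varphi_d^{-1}(V)$.

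\emph{Step 3: spread out the trivialization.} A trivialization is a section of the torsor $\varphi_d^*Q$ over the finitely presented scheme $\Pj^1_{R_f}\setminus Y_{R_f}$. This scheme is quasi-compact, and it is the limit of the schemes $\Pj^1_{R_{fg}}\setminus Y_{R_{fg}}$. The torsor $\varphi_d^*Q$ is locally of finite presentation. By the standard limit formalism (e.g.\ \SP{01ZC}), the section $\tau$ descends to a section over $\Pj^1_{R_{fg}}\setminus Y$ for some $g\notin\bigcup\mathfrak p_i$.

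Its restriction to $\infty_{R_{fg}}$ and the given trivialization are two sections of a torsor over $\Spec R_{fg}$. These two sections agree after passing to $A$. Both are sections of a finitely presented scheme, and $A$ is the colimit of the rings $R_{fgg'}$. So after inverting a further $g'\notin\bigcup\mathfrak p_i$, the two sections agree. We then replace $f$ by $fgg'$, which is nonzero because $R$ is a domain. Shrinking $f$ in this way preserves the conclusion of Step 2.

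\emph{Main obstacle.} There is no deep step here. The main point needing care is Step 2, the prime-avoidance argument. The properness of $\Pj^1$ is essential there: it is what forces the bad locus to come from a closed subset of the base. The rest is bookkeeping, namely verifying that $Y$ is finitely presented so that the limit argument in Step 3 applies.
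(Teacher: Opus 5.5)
Your proposal is correct and follows essentially the same route as the paper: apply Proposition~\ref{prop:power} to $Q_A$ over the semilocalization $A$, use properness of $\Pj^1_R\to\Spec R$ and prime avoidance to find $f\notin\bigcup_i\mathfrak p_i$ with $\Pj^1_{R_f}\subset\varphi_d^{-1}(V)$, and then spread out the trivialization. Your Step~3 spells out the paper's one-line ``spreading out'', including the extra localization that makes the restriction at the infinity section agree with the given trivialization. The paper chooses $f$ before invoking the proposition, but that difference in order is immaterial.
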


\begin{proof}
The ring $A$ is the filtered colimit of the rings $R_f$, where
$f\notin\bigcup_i\mathfrak p_i$. By properness, the image in $\Spec R$
of the complement of $\varphi_d^{-1}(V)$ is closed and avoids the
primes $\mathfrak p_i$. By prime avoidance, we may therefore choose
such an $f$ with $\Pj^1_{R_f}\subset\varphi_d^{-1}(V)$.
Proposition~\ref{prop:power}, applied to $Q_A$, gives a trivialization
of $\varphi_d^*Q_A$ on $\Pj^1_A\setminus Y_A$ with the prescribed
restriction to the infinity section. By spreading out, we may choose $f$
so that both this trivialization and its prescribed restriction at infinity
are defined over $R_f$.
\end{proof}

\begin{proposition}\label{prop:normalization}
Let $R$ be a Noetherian regular domain, and let $G$ be a reductive
$R$-group. Let $Q$ be a $G$-torsor on an
open $V\subset \Pj^1_R$ containing $0_R,\infty_R$ and every point of
codimension $\le 2$, and suppose that $\Pj^1_R\setminus V$ is $R$-finite. Fix isomorphisms
$Q|_{0_R}\simeq E$ and $Q|_{\infty_R}\simeq1$.
Suppose that $0\neq f\in R$, $\Pj^1_{R_f}\subset V$, and there
are an $R$-finite \'{e}tale divisor $Y\subset\Gm_R$ and a trivialization
\[
 \tau:Q|_{\Pj^1_{R_f}\setminus Y_{R_f}}\simeq1.
\]
Then there are an open $V'\subset \Pj^1_R$ containing $0_R,\infty_R$
and a $G$-torsor $Q'$ on $V'$, with the prescribed
identifications $Q'|_{0_R}\simeq E$ and
$Q'|_{\infty_R}\simeq1$, such that
\begin{enumerate}[label=(\roman*)]
 \item $V'$ contains every point of codimension $\le 2$;
 \item $\Pj^1_{R_f}\subset V'$ and the restriction $Q'|_{\Pj^1_{R_f}}$ is trivial;
 \item $\Pj^1_R\setminus V'$ is finite over $R$, and its image
       $Z\subset\Spec R$ is closed, lies in $V(f)$, and has
       codimension $\ge 2$;
 \item there is a specified isomorphism
       $Q'|_{V\setminus Y}\simeq Q|_{V\setminus Y}$, under which
       the identifications at $0_R$ and $\infty_R$ are the given ones.
\end{enumerate}
\end{proposition}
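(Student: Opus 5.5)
Glue $Q|_{V\setminus Y}$ with the trivial torsor on $\Pj^1_{R_f}\setminus Y_{R_f}$ via $\tau$, and then fill in the remaining points using purity in codimension $2$. The gluing requires an open set on which the two descriptions are compatible. Since $\tau$ is defined only on $\Pj^1_{R_f}\setminus Y_{R_f}$, we cannot hope to keep $Q$ near $Y_{R_f}$ and simultaneously trivialize over all of $\Pj^1_{R_f}$. The plan is therefore to \emph{discard} $Q$ along $Y$ and replace it there by the trivial torsor, which is what (ii) and (iv) ask for.

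\textbf{Step 1: the torsor away from $Y$.} Consider $W:=(V\setminus Y)\cup\Pj^1_{R_f}$. Define $Q'$ on $W$ by gluing $Q|_{V\setminus Y}$ with the trivial torsor on $\Pj^1_{R_f}$ along the overlap $(V\setminus Y)\cap\Pj^1_{R_f}=\Pj^1_{R_f}\setminus Y_{R_f}$, using $\tau$. Both opens are Zariski opens, so this is ordinary Zariski gluing. The sections $0_R$ and $\infty_R$ lie in $V\setminus Y$ because $Y\subset\Gm_R$, so the identifications there are inherited. Condition (iv) and the triviality in (ii) then hold by construction.

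\textbf{Step 2: extending across codimension $2$.} The complement $\Pj^1_R\setminus W$ is contained in $(\Pj^1_R\setminus V)\cup (Y\cap V(f))$. The first piece is $R$-finite by hypothesis, and $Y_{V(f)}$ is $R$-finite as well. The points of codimension $\le 2$ that are missing from $W$ are exactly those of $Y_{V(f)}$ of codimension $\le2$, namely the generic points of $Y_{V(f)}$ (codimension $2$ in $\Pj^1_R$, since $Y$ is a divisor and $V(f)$ cuts it further) and possibly codimension-$1$ points if $Y$ contains a component of a vertical fibre. The latter cannot happen, since $Y$ is finite over $R$.

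At a codimension-$2$ point $y\in Y_{V(f)}$, the local ring $\OO_{\Pj^1_R,y}$ is regular of dimension $2$. The torsor $Q'$ is defined on its punctured spectrum. By the Colliot-Thélène--Sansuc purity for reductive torsors on punctured spectra of $2$-dimensional regular local rings (every such torsor extends, via extension of the associated vector bundles and affine quotient $\GL_n/G$), $Q'$ extends over $\Spec\OO_{\Pj^1_R,y}$, and hence over an open neighbourhood of $y$. Doing this at the finitely many such generic points, and gluing by the usual spreading-out argument with uniqueness of extensions across codimension $\ge2$ (Hartogs for the affine $G$), gives $V'\supset W$ containing all points of codimension $\le 2$. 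This yields (i).

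\textbf{Step 3: the complement.} $\Pj^1_R\setminus V'$ is closed in the $R$-finite set $(\Pj^1_R\setminus V)\cup Y_{V(f)}$, so it is $R$-finite and its image $Z$ is closed. Since $\Pj^1_{R_f}\subset V'$, we get $Z\subset V(f)$. By finiteness, each point of $\Pj^1_R\setminus V'$ has codimension $\ge 3$ in $\Pj^1_R$, and its image therefore has codimension $\ge2$ in $\Spec R$ (the fibres are quasi-finite). This gives (iii).

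The main obstacle is Step 2, the codimension-$2$ extension for a general reductive $G$ over a mixed-characteristic regular ring. I would try to cite purity for $2$-dimensional regular local rings first (as in the codimension-$\le2$ extension already implicit in Lemma~\ref{prop:partial-family}), and fall back on vector bundle extension plus reflexivity if needed.
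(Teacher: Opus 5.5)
Your proposal is correct and follows the paper's proof essentially step for step. You glue $Q|_{V\setminus Y}$ with the trivial torsor over $\Pj^1_{R_f}$ via $\tau$ on $W=(V\setminus Y)\cup\Pj^1_{R_f}$, observe that the only points of codimension $\le 2$ missing from $W$ are codimension-two points of $Y\cap\Pj^1_{R/f}$, extend across them by Colliot-Th\'el\`ene--Sansuc purity plus spreading out, and read off (iii) from relative dimension one; the paper justifies the codimension claim by noting $f$ is a nonzerodivisor on the Cartier divisor $Y$, which is equivalent to your finiteness argument. One small slip: in Step 3, the bound $\codim\ge 3$ for points of $\Pj^1_R\setminus V'$ comes from (i), not from finiteness.
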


\begin{proof}
Put $W=(V\setminus Y)\cup \Pj^1_{R_f}$. Glue $Q|_{V\setminus Y}$
with the trivial torsor over $\Pj^1_{R_f}$ using $\tau$ along
$\Pj^1_{R_f}\setminus Y_{R_f}$, and denote the resulting torsor by
$Q_W$. Since $0_R,\infty_R\subset V\setminus Y$, this gluing
preserves the specified restrictions at both sections. It also gives a
trivialization of $Q_W|_{\Pj^1_{R_f}}$.

We have
\[
 \Pj^1_R\setminus W\subset
 (\Pj^1_R\setminus V)\ \cup\ (Y\cap \Pj^1_{R/f}).
\]
The first term has codimension $\ge 3$. The $R$-finite \'etale subscheme $Y \subset \bP_R^1$ is an
effective Cartier divisor. Thus $f$ is a
nonzerodivisor on $Y$, so $Y\cap \Pj^1_{R/f}$ is locally cut out by
a regular sequence of length two. It follows that $W$ contains
every point of codimension $\le 1$ and that $\Pj^1_R\setminus W$ is
$R$-finite.

To finish, we apply the purity theorem
\cite{CTS79}*{th\'eorème 6.13} to the semilocal ring of the generic
points of $\Pj^1_R\setminus W$ of codimension 2. By spreading out
and patching, the torsor $Q_W$ over $W$ extends to a $G$-torsor $Q'$
over some open $V'$ of $\bP_R^1$ containing all points of codimension
$\le 2$. As $\bP_R^1 \to \Spec R$ is smooth of relative dimension 1, the image of
$\bP_R^1 \backslash V'$ in $\Spec R$ has codimension $\ge 3-1=2$.
\end{proof}

\section{Relative curves and Nisnevich excision}\label{sec:curves-excision}

In the geometric approach to the Grothendieck--Serre conjecture, the first step is to build a relative
curve via the Gabber--Quillen type geometric presentation lemmas,
so that the relevant torsors can ``move'' in a family.
There has been tremendous effort to develop such presentation lemmas in mixed
characteristic, leading to partial success in attacking the
conjecture; see \cite{Fed21}*{Proposition 3.18},
\cite{Ces22a}*{Proposition 4.1}, \cite{GL23}*{Proposition 4.4 and Variant 4.8},
\cite{CF23}*{Proposition 2.3}. We will use the following version
from \emph{loc. cit.}\textcolor{red}{,} which weakens the finiteness requirement in the previous version to quasi-finiteness.

\begin{lemma}[\cite{CF23}*{Proposition~2.3}]
\label{prop:geometric-presentation}
Let $X$ be a smooth affine scheme of pure relative dimension $r>0$
over a Dedekind ring $\OO$, let $x_1,\ldots,x_n\in X$,
and let $Z\subset Y\subset X$ be closed subschemes.
Suppose that $Y$ contains no irreducible component of any
$\OO$-fibre and that $\codim_X Z\geq2$.
There are affine opens $X'\subset X$ and
$S\subset\A^{r-1}_{\OO}$, with all $x_i\in X'$, and a
smooth morphism $X'\to S$ of relative dimension one for
which $Z\cap X'$ is $S$-finite and $Y\cap X'$ is
$S$-quasi-finite.
\end{lemma}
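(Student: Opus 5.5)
This is the quasi-finite refinement of the Gabber--Quillen presentation lemma, and I would prove it by the standard projection argument, following \cite{Ces22a}*{Proposition 4.1} and \cite{GL23}*{Proposition 4.4}.

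\textbf{Reduction and embedding.} First I reduce to the case where $\OO$ is semilocal, by localizing at the finitely many images of the $x_i$; a Dedekind ring with finitely many primes has infinite or finite residue fields that can be handled uniformly by taking high-degree hypersurfaces. Then I embed $X\hookrightarrow\A^N_{\OO}$, take the closure $\bar X\subset\Pj^N_{\OO}$, and let $X_\infty=\bar X\setminus X$ be the boundary. The goal is a finite $\OO$-morphism
\[
 \pi\colon\bar X\setminus(\text{small center})\to\Pj^{r-1}_{\OO}
\]
given by $r$ homogeneous forms of a common large degree. I want $\pi$ to satisfy the following conditions near the $x_i$:
\begin{enumerate}[label=(\alph*)]
 \item it is smooth of relative dimension one at every point of $\pi^{-1}(\pi(x_i))\cap X$;
 \item it restricts to a finite map on $Z$, with no point of $\bar Z$ at infinity lying over $\pi(x_i)$;
 \item it restricts to a quasi-finite map on $Y$.
\end{enumerate}
I would choose the forms fibrewise over the closed points of $\Spec\OO$ via Bertini-type avoidance results (e.g.\ \cite{Ces22}*{Proposition~4.1.3}, with degrees chosen in common as in Lemma~\ref{lem:carrier}), and then lift them by the Chinese remainder theorem.

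\textbf{Dimension counts and fibre conditions.} Condition (b) is a dimension count. In each closed fibre, $\bar Z_{k}\cap X_{\infty,k}$ has dimension $\le r-3$, because $\codim Z\ge2$ and we also remove a boundary. So $r$ general hypersurfaces, namely the base locus of $\pi$ together with one more hypersurface cutting out the fibres of $\pi$ over the points, avoid it. The same holds for the generic fibre once the closed-fibre conditions are arranged, since they are open conditions. Condition (c) uses only that $Y$ contains no component of an $\OO$-fibre, so $\dim Y_k\le r-1$; a general linear projection to $\Pj^{r-1}$ then has finite fibres on $Y_k$, but not necessarily proper ones. This is why we obtain only quasi-finiteness, and it is exactly why the weakened hypothesis suffices. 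Condition (a) is the generic smoothness of a general projection at finitely many points in each fibre, handled via Jacobian-type Bertini statements. Positive or small residue characteristic makes this the delicate part, which is why the degrees are taken divisible by the characteristic exponents.

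\textbf{Shrinking and the main obstacle.} Finally I shrink. I let $S\subset\A^{r-1}_{\OO}\subset\Pj^{r-1}_{\OO}$ be an affine open neighborhood of the images $\pi(x_i)$ over which $\pi|_{\bar Z}$ avoids the boundary; this locus is open because $\pi$ is proper on $\bar Z$ away from the center. I then take $X'\subset X\cap\pi^{-1}(S)$ to be the smooth locus of $\pi$, shrunk to an affine neighborhood of the $x_i$. To keep $Z\cap X'$ finite over $S$, I must not cut away any point of $Z$ over $S$. I therefore need the whole preimage $\pi^{-1}(S)\cap Z$ to lie in the smooth locus, and arrange this by shrinking $S$ further, using that the closed non-smooth locus meets $Z$ in a set whose image misses the $\pi(x_i)$. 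I expect the main obstacle to be making (a) and (b) hold simultaneously in mixed characteristic, where the closed fibres are over finite fields and the forms must have a common degree across fibres. I would address this with Poonen-style Bertini over finite fields combined with the common-degree trick of Lemma~\ref{lem:carrier}.
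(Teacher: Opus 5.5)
The paper gives no argument for this lemma; it imports \cite{CF23}*{Proposition~2.3}. Your outline is the standard Gabber-type projection argument on which that result (and \cite{Ces22a}*{Proposition~4.1}) is built: compactify, take $\pi=[h_0:\cdots:h_{r-1}]$ with base locus disjoint from $\bar Y$, then shrink $S$ away from $\pi(\bar Z\cap X_\infty)$ and from the image of the non-smooth locus along $Z$. Only $\pi|_{\bar Y}$ is finite; $\pi$ itself has one-dimensional fibres. The architecture is sound.

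\textbf{The justification of (b) is wrong, at exactly the mixed-characteristic point.} Over a closed point of a one-dimensional $\OO$, the hypothesis $\codim_X Z\geq 2$ only gives $\dim Z_k\le r-1$, e.g.\ $Z=V(\varpi,t_1)\subset\A^r_{\OO}$. Moreover, the flat closure of $Z_K$ may specialize entirely into the boundary. For example, $Z=V(\varpi t_1-1,t_2)$ has closure in $\Pj^r_{\OO}$ whose closed fibre is $V(t_0,t_2)\simeq\Pj^{r-2}_k$, lying at infinity. So the sharp bound is $\dim(\bar Z\cap X_\infty)_k\le r-2$, not $r-3$.

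This bound still suffices, for the following reason. The closure of the fibre of $\pi$ through $x_i$, together with the base locus, is cut out by $r-1$ combinations of the $h_j$ vanishing at $x_i\notin X_\infty$. Such forms can be chosen to miss a closed set of dimension $r-2$. The base locus itself has codimension $r$, and it misses $\bar Y_k\supset\bar Z_k$ because $\dim\bar Y_k\le r-1$.

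Note that $\bar Y_k$ includes the specialization of $\overline{Y_K}$, which still has dimension $\le r-1$ by flatness. So ``finite fibres on $Y_k$'' is not the right condition for (c). You need $\bar Y\cap V(h_0,\ldots,h_{r-1})=\varnothing$, and this spreads from the closed fibres to all of $\Spec\OO$ by properness. That, not openness, is why the generic fibre comes for free. The same example $Y=V(\varpi t_1-1)$, where $(\bar Y\cap X_\infty)_k=\Pj^{r-1}_k$, shows the honest reason only quasi-finiteness is available for $Y$: the image of its boundary fills the closed fibre of $\Pj^{r-1}_{\OO}$.

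\textbf{Three further points need tightening.}

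First, condition (a) must be smoothness along the whole fibre curve through $x_i$, or at least at its points on $Z$. Those points depend on $\pi$, so you cannot fix finitely many points in advance. Over a finite residue field this is Poonen's Bertini theorem with Taylor conditions. To treat several $x_i$ in one closed fibre at once, it is convenient to force $h_1,\ldots,h_{r-1}$ to vanish at all of them (with $h_0$ nonzero there), so that they share a fibre.

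Second, points $x_i$ that are closed in $X$ but lie over the generic point of $\OO$ impose conditions over $K$ that the closed-fibre choices do not control. Since the Chinese-remainder lifts are Zariski dense in the $K$-space of forms, a generic lift handles them.

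Third, $X'$ must contain all of $Z\cap\pi^{-1}(S)$, not just the $x_i$. Take the principal open of the affine scheme $X\cap\pi^{-1}(S)$ defined by a function that is $1$ on $Z$, vanishes on the non-smooth locus, and is nonzero at the $x_i$.

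With these corrections your argument proves the statement.
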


The following result will help us to build Nisnevich squares that facilitate passage to (opens of) $\bP_A^1$ via patching.  

\begin{lemma}[\cite{CF23}*{Lemma~2.7}, \cite{GL23}*{ Variant 4.8}]
\label{lem:curve-embedding}
Let $A$ be a semilocal ring, let $C$ be a smooth affine
$A$-scheme of pure relative dimension one, and let
$T\subset C$ be an $A$-quasi-finite closed subscheme.
Suppose that $T_{k_{\fm}}$ admits a closed immersion into $\A^1_{k_{\fm}}$
for every maximal ideal $\fm \subset A$.
There are an affine open $C'\subset C$ containing the closed $A$-fibres of $T$, an affine open $W\subset\A^1_A$,
and an \'etale morphism $f:C'\to W$ that embeds
$T\cap C'$ as a closed subscheme
$T_W\subset W$, such that the following square is Cartesian:
\[
\begin{tikzcd}
 T\cap C' \arrow[r, hook] \arrow[d, "\sim"']
   & C' \arrow[d, "f"] \\
 T_W \arrow[r, hook]
   & W.
\end{tikzcd}
\]
\end{lemma}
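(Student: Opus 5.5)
The plan is to construct $f$ as a single function $g\in\OO(C)$, viewed as a morphism $g\colon C\to\A^1_A$. We prescribe $g$ on the closed fibres so that it is étale along $T$ and injective on $T$ there. We then shrink the source to an affine open $C'$ and the target to an affine open $W$ so that the square becomes Cartesian. Two facts are used throughout. First, $A$ is semilocal with finitely many maximal ideals $\fm$. Second, $T_{k_\fm}$ is quasi-finite over a field, hence a finite scheme.

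\textbf{Step 1: choice of $g$ on the closed fibres.} Fix $\fm$ and let $T^{(1)}_\fm\subset C_{k_\fm}$ be the first-order thickening of $T_{k_\fm}$, cut out by $\mathcal I_{T_{k_\fm}}^2$. We are given a closed immersion $T_{k_\fm}\hookrightarrow\A^1_{k_\fm}$, i.e.\ a generator $\bar g_\fm$ of the finite $k_\fm$-algebra $\OO(T_{k_\fm})$. We lift $\bar g_\fm$ to a function on $T^{(1)}_\fm$ whose differential is nonzero in $\Omega_{C_{k_\fm}}$ at every point of $T_{k_\fm}$. This should be possible pointwise: at a closed point $x$, we may add to any lift an element of $\mathcal I_{T_{k_\fm}}/\mathcal I_{T_{k_\fm}}^2$. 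If the given lift already has nonzero differential at $x$ we keep it. If not, then $T_{k_\fm}$ is reduced at $x$ (otherwise the embedding into $\A^1$ would already force a nonzero differential), and adding a local uniformizer of the smooth curve $C_{k_\fm}$ at $x$ works. We then glue over the finitely many points by the Chinese remainder theorem. Since $C$ is affine and there are finitely many $\fm$, the map
\[
\OO(C)\to\textstyle\prod_\fm \OO(T^{(1)}_\fm)
\]
is surjective, so we lift all these local choices simultaneously to one $g\in\OO(C)$.

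\textbf{Step 2: properties of $g$.} By the fibrewise criterion for étaleness (both sides are $A$-smooth of relative dimension one), $g$ is étale at every point of $T$ lying over a closed point of $\Spec A$. Moreover, $g|_{T_{k_\fm}}$ is a closed immersion for every $\fm$. We replace $C$ by an affine open neighbourhood, inside the étale locus, of the finitely many closed points of $T$ over the maximal ideals. This is possible by prime avoidance in the affine scheme $C$. The map $g|_T\colon T\to\A^1_A$ is then unramified and quasi-finite.

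\textbf{Step 3: shrinking to make the square Cartesian.} This is the step I expect to be the main obstacle. We need $W\subset\A^1_A$ and $C'=g^{-1}(W)$, suitably shrunk, with two properties: $g$ maps $T\cap C'$ isomorphically onto a closed subscheme $T_W$ of $W$, and $g^{-1}(T_W)\cap C'=T\cap C'$. The bad locus consists of three parts:
\begin{itemize}
\item the points of $g^{-1}(g(T))$ not lying on $T$;
\item the points where $g|_T$ fails to be a closed immersion, meaning non-injectivity or a nontrivial residue-field extension;
\item the non-closed part of $g(T)$.
\end{itemize}
To control these, compactify $T$ via Zariski's main theorem as an open in a scheme $\bar T$ finite over $A$, and use that finite morphisms have closed images. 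The aim is to show that the image in $\A^1_A$ of the whole bad locus is contained in a closed subset $B\subset\A^1_A$. By Step 1, $B$ misses the finitely many closed points $g(x)$ with $x\in T$ over the maximal ideals. Prime avoidance in $A[t]$ then gives a polynomial $h$ vanishing on $B$ but at none of these points. We set $W=D(h)$ and take $C'\subset g^{-1}(W)$ an affine open containing the closed-fibre points of $T$. Over $W$, the map $T\cap C'\to W$ is unramified, universally injective and proper, hence a closed immersion. Since $g$ is étale and no other branch of $g^{-1}(g(T))$ survives in $C'$, the square is Cartesian.

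The delicate point I would check first is the properness of $T\cap C'\to W$. I would obtain it by also putting the image of $\bar T\setminus T$ into $B$, and by choosing $C'$ so that $T\cap C'=T\cap g^{-1}(W)$ is closed in $g^{-1}(W)$.
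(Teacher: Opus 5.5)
Your Steps~1--2 are fine, but Step~3 has a genuine gap, and it cannot be closed for the $g$ that Step~1 produces. Fixing only the $1$-jet of $g$ along $T_{k_\fm}$ says nothing about $g|_T$ at the points of $T$ off the closed fibres. These points cannot be discarded, because any open $C'$ containing the closed-fibre points of $T$ contains all their generizations.

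Here is an example. Take $A=\mathbb Z_{(3)}$, $C=\Spec A[u,(u-1)^{-1}]$ and $T=V(u^2-u-3)\subset C$. The ring $A[u]/(u^2-u-3)$ is a domain, since $13$ is not a square, with maximal ideals $(3,u)$ and $(3,u-1)$; inverting $u-1$ removes exactly the second. So $T$ consists of a point $t$ with $T_{\mathbb F_3}=\Spec\mathbb F_3$ and a generic point $\eta$ with residue field $\mathbb Q(\sqrt{13})$.

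The function $g=u^2-u$ meets all your requirements: $dg=(2u-1)\,du$ is nonzero at $t$, and $g|_{T_{\mathbb F_3}}$ is the closed immersion onto the origin. Yet $g|_T=3$ is constant. Hence $g(\eta)$ has residue field $\mathbb Q$, and $T\cap C'\to W$ is never a closed immersion, for any $C'\ni t$ and any $W$ (whereas $g=u$ works).

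In your terms, $\eta$ is in the bad locus with $g(t)\in\overline{\{g(\eta)\}}$, and the boundary point $(3,u-1)$ of $\bar T$ also maps to $g(t)$. So the claim that $B$ misses the points $g(x)$ ``by Step~1'' is exactly what fails, and the properness you flag cannot be obtained by shrinking.

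A smaller issue: item (1) of your bad locus usually meets the fibre of $g$ over $g(t_i)$. It must be removed in the source, using that $T\cap C'$ is open and closed in $g^{-1}(T_W)\cap C'$ by \'etaleness, not by shrinking $W$.

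For $A$-finite $T$ your argument does go through. There $g|_T$ is finite, and Nakayama makes it a closed immersion, since every nonempty closed subset of an $A$-finite scheme meets a closed fibre. But the lemma is precisely about the quasi-finite case.

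The missing idea is to choose $g|_T$ with control at the boundary of a finite compactification. The paper only quotes this lemma, so there is no in-paper proof to compare with. Here is one way to carry it out.

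By Zariski's main theorem, $T$ is open in an $A$-finite $\bar T$. After passing to a principal open of $C$, you may assume $T=D_{\bar T}(h)$ for some $h\in\OO(\bar T)$.

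Since $T_{k_\fm}$ is a union of connected components of the finite scheme $\bar T_{k_\fm}$, you can choose $a,b\in\OO(\bar T)$ generating the unit ideal such that
\begin{itemize}
\item $(a,b)=(\bar g_\fm,1)$ on $T_{k_\fm}$;
\item $(a,b)=(1,0)$ on the rest of $\bar T_{k_\fm}$.
\end{itemize}
Then $\bar g=[a:b]\colon\bar T\to\Pj^1_A$ is finite. It sends the closed-fibre boundary points to $\infty$, and over $\A^1_{k_\fm}$ it is the given closed immersion.

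By Nakayama, $\bar g$ is a closed immersion over some affine open $W\subset\A^1_A$ containing the points $g(t_i)$. You can also take $W$ disjoint from the closed set $\bar g(\bar T\setminus T)$, which misses the closed fibres of $\A^1_A$. Then $\bar g^{-1}(W)\subset T\cap D(b)$.

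Now take $g$ to be a lift of $a/b$ to the principal open of $C$ where a lift of $b$ is invertible. Add an element of the ideal of $T$ to make $g$ \'etale at the $t_i$. This is your Step~1 argument, and it does not change $g|_T$.

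Finally, shrink $W$ to avoid the image of the non-\'etale locus on $T$. Then cut away, by principal opens, the non-\'etale locus and the other components of $g^{-1}(T_W)$. This gives $T\cap C'=\bar g^{-1}(W)\simeq T_W$ with the square Cartesian.
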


\begin{lemma}\label{lem:whole-excision}
Let $R$ be a Noetherian ring, and let $f:C\to W$ be an \'{e}tale morphism
of affine $R$-schemes of finite type. Let $T\subset C$ be a closed subscheme that
 maps isomorphically to a closed subscheme
$T_W\subset W$, with
$
 C\times_W T_W=T.
$
Let $Z\subset Y\subset T$ be closed subschemes, and write $Z_W$
and $Y_W$ for their images in $W$. Let $G$ be an affine,
finitely presented $R$-group, and let $Q$ be a $G$-torsor over an
open $V\subset\Pj^1_C$ containing
$\Pj^1_{C\setminus Z},0_C,\infty_C$. Fix isomorphisms
\[
 Q|_{0_C}\simeq E_C,\qquad
 \beta:Q|_{\infty_C}\simeq1_C,
\]
and a trivialization $\alpha$ of $Q$ on
$\Pj^1_{C\setminus Y}$ whose restriction to the infinity section is
$\beta|_{C\setminus Y}$.
Then there are an open $V_W\subset\Pj^1_W$ containing
$\Pj^1_{W\setminus Z_W},0_W,\infty_W$ and a $G$-torsor
$Q_W$ on $V_W$, together with
\[
 (\operatorname{id}\times f)^{-1}(V_W)=V,
 \qquad
 (\operatorname{id}\times f)^*Q_W\simeq Q,
\]
such that $\alpha$ and $\beta$ descend to trivializations on
$\Pj^1_{W\setminus Y_W}$ and $\infty_W$, respectively.
In particular, $E_W=Q_W|_{0_W}$ satisfies
$f^*E_W\simeq E_C$. If $s\in C(R)$ factors through $T$, then
$
 (f\circ s)^*E_W\simeq s^*E_C.
$
Moreover, the reduced complements of $V$ and $V_W$ are
isomorphic over $R$.
\end{lemma}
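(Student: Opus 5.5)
The plan is Nisnevich (étale) excision via gluing along the Cartesian square, applied fibrewise over $\Pj^1$. The square $T\hookrightarrow C$, $T_W\hookrightarrow W$ is Cartesian with $f$ étale. Hence $\Pj^1_C\to\Pj^1_W$ restricts to an isomorphism over $\Pj^1_{T_W}$. The pair $\bigl(\Pj^1_{W\setminus T_W},\ \Pj^1_C\bigr)$ is therefore an elementary distinguished Nisnevich square covering $\Pj^1_W$, after replacing $\Pj^1_C$ by its image-open. More precisely, $\Pj^1_{W'}$ with $W'=(W\setminus T_W)\cup f(C)$ is covered, and since $T_W\subset f(C)$ we may shrink $W$ to $f(C)$ if needed. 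I would first record that Nisnevich squares satisfy descent for the stack $\mathbf{B}G$ with $G$ affine of finite presentation, and likewise for sections of torsors, i.e.\ trivializations. This is standard fpqc/Nisnevich descent: an affine group yields an étale stack, and the square is a cdh/Nisnevich pushout.

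The construction runs as follows. Set $V_W=(\operatorname{id}\times f)(V)\cup\Pj^1_{W\setminus T_W}$, hmm; more carefully,
\[
V_W=\Pj^1_{W\setminus Z_W}\cup \bigl(\Pj^1_{T_W}\cap V\bigr),
\]
where $\Pj^1_{T_W}\cap V$ is viewed inside $\Pj^1_{T_W}$ via the isomorphism $T\simeq T_W$. Openness of $V_W$ follows because its complement is $(\Pj^1_T\setminus V)$, transported to $\Pj^1_{Z_W}$. This set is closed in $\Pj^1_W$, since $T_W$ is closed and $V\supset\Pj^1_{C\setminus Z}$ forces $\Pj^1_C\setminus V\subset\Pj^1_Z$. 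The Cartesian property gives $(\operatorname{id}\times f)^{-1}(V_W)=V$: over $C\setminus T$ both sides are everything, and over $T$ they match by construction. This also yields the statement that the reduced complements are isomorphic over $R$.

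Next I would glue $Q$ on $V$ with the trivial torsor on $\Pj^1_{W\setminus Y_W}$. The gluing datum on the overlap $V\times_{V_W}\Pj^1_{W\setminus Y_W}=\Pj^1_{C\setminus Y}$ is $\alpha$. The cocycle condition lives on $\Pj^1_{C\setminus Y}\times_{\Pj^1_W}\Pj^1_{C\setminus Y}$, where it says that the two pullbacks of $\alpha$ agree. On the diagonal component this is tautological. Off the diagonal, the fibre product lies over $W\setminus T_W$, and there the descent is along an étale map to the trivial torsor. Hence we need the compatibility of $\alpha$ with the trivial descent datum, which holds because the gluing is performed with the trivial torsor on $W\setminus Y_W$ itself. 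Concretely, I would phrase this as descent along the Nisnevich cover $\{V\to V_W,\ \Pj^1_{W\setminus Y_W}\to V_W\}$, noting that $Y_W\supset T_W\cap\ldots$. In fact the cover is jointly surjective because $V_W\setminus\Pj^1_{W\setminus Y_W}\subset\Pj^1_{Y_W}$, which is isomorphic to $\Pj^1_Y$. The descent data then reduce to the single isomorphism $\alpha$, exactly as in the standard Nisnevich excision for torsors, e.g.\ \cite{CF23}*{Lemma~2.7} and its proof. The resulting $Q_W$ satisfies $(\operatorname{id}\times f)^*Q_W\simeq Q$, and $\alpha$ descends to a trivialization over $\Pj^1_{W\setminus Y_W}$ by construction. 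For $\beta$, I would glue $\beta$ on $\infty_C$ with the trivial torsor on $\infty_{W\setminus Y_W}$. The two are compatible since $\alpha|_\infty=\beta|_{C\setminus Y}$, so $\beta$ descends.

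Finally, $E_W=Q_W|_{0_W}$ pulls back to $Q|_{0_C}\simeq E_C$. If $s$ factors through $T$, then $(f\circ s)^*E_W=s^*(\operatorname{id}\times f)^*Q_W|_0\simeq s^*E_C$. The main obstacle is checking that the two-member cover actually satisfies the Nisnevich excision hypotheses over $V_W$. This requires that $V\to V_W$ is étale and an isomorphism over the closed complement $\Pj^1_{Y_W}\cap V_W$. That in turn follows from $C\times_W T_W=T$ and $Y\subset T$, which give $C\times_W Y_W=Y$. I expect this step to need the most care; everything else is formal.
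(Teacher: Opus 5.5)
Your proposal is correct and follows essentially the same route as the paper's proof. You transport the complement of $V$ into $\Pj^1_W$ via the isomorphism $\Pj^1_T\simeq\Pj^1_{T_W}$; the paper uses $\Pj^1_Y\simeq\Pj^1_{Y_W}$, which amounts to the same thing since the complement lies over $Z$. You then note that $C\times_W Y_W=Y$ makes $\Pj^1_{C\setminus Y}\to V$ over $\Pj^1_{W\setminus Y_W}\subset V_W$ an elementary Nisnevich square, glue $Q$ with the trivial torsor using $\alpha$, and descend $\beta$ on the infinity section using $\alpha|_{\infty}=\beta|_{C\setminus Y}$.

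Two points are loose. First, the descent datum actually lives on $V\times_{V_W}V$, whose off-diagonal clopen part lies over $W\setminus T_W$, rather than being a condition that "two pullbacks of $\alpha$ agree". Second, the aside about shrinking $W$ to $f(C)$ is vacuous, because $T_W\subset f(C)$ already gives $(W\setminus T_W)\cup f(C)=W$. Neither is a gap, since both are covered by the standard Nisnevich patching theorem, which the paper cites from Rydh.
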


\begin{proof}
Put $X'=\Pj^1_C$, $X=\Pj^1_W$, and
$p=\operatorname{id}\times f$. Endow $B=X'\setminus V$ with the
reduced structure; its support lies in
$\Pj^1_Z\subset\Pj^1_Y$, hence the inclusion
$B\subset X'$ factors through $\Pj^1_Y$. The excision assumption
gives a Cartesian isomorphism
$\Pj^1_Y\simeq\Pj^1_{Y_W}$. Let
$B_W\subset\Pj^1_{Y_W}\subset X$ be the closed subscheme
corresponding to $B$ under this isomorphism. Then
$p^{-1}(B_W)=B$ as closed subschemes. Thus
$
 V_W=X\setminus|B_W|
$
has inverse image $V$ and complement supported over $Z_W$,
and $B\simeq B_W$ over $R$.

The sections $0_W$ and $\infty_W$ are contained in $V_W$.
Indeed, any intersection with $B_W$ would lie over $Y_W$, where
the isomorphism $\Pj^1_Y\simeq\Pj^1_{Y_W}$ identifies it with
the empty intersection of $B$ with $0_C$ or $\infty_C$.
This also verifies the claimed inclusions at points over $Z_W$.

The following Cartesian square is a Nisnevich square, since $p$
is \'etale and induces an isomorphism
$V\cap\Pj^1_Y\simeq V_W\cap\Pj^1_{Y_W}$:
\[
\begin{tikzcd}
 \Pj^1_{C\setminus Y} \arrow[r, hook] \arrow[d, "\sim"']
   & V \arrow[d, "p"] \\
 \Pj^1_{W\setminus Y_W} \arrow[r, hook]
   & V_W.
\end{tikzcd}
\]
By Nisnevich patching (see, e.g., \cite{Ryd11}*{Theorem~A}), we may glue $Q$ with the trivial
$G$-torsor over $\Pj^1_{W\setminus Y_W}$ using $\alpha$ to obtain a $G$-torsor $Q_W$ over $V_W$, together with the
specified isomorphism $p^*Q_W\simeq Q$ and the trivialization on
$\Pj^1_{W\setminus Y_W}$.

Pulling the square back to $\infty_W$ and applying excision to
isomorphisms, we may glue $\beta$ with the standard trivialization
over $W\setminus Y_W$, since
$\alpha|_{\infty_{C\setminus Y}}=\beta|_{C\setminus Y}$.
Thus $\beta$ descends to a trivialization of
$Q_W|_{\infty_W}$. Restricting $p^*Q_W\simeq Q$ to the zero section gives
$f^*E_W\simeq E_C$; pulling back further along $s$ gives
$(f\circ s)^*E_W\simeq s^*E_C$.
\end{proof}

\begin{remark}\label{rem:normalize-frame}
The required compatibility of $\alpha$ with $\beta$ can always
be arranged. For an affine group $G$ and a scheme $D$, pullback
induces an isomorphism
$
 G(D)\xrightarrow{\sim}G(\Pj^1_D),
$
as follows locally from
$\Gamma(\Pj^1_D,\OO)=\Gamma(D,\OO)$.
Thus a trivialization on $\Pj^1_{C\setminus Y}$ can be adjusted by
a unique constant automorphism to have the prescribed restriction to the 
infinity section.
\end{remark}

\section{Proof of the main results}\label{sec:main-proof}

Let $E$ be a generically trivial $G$-torsor over $R$.
We prepare the partial family of Proposition~\ref{prop:partial-family}
for the relative-curve construction of Proposition~\ref{prop:graph}.

\begin{pp}[Reduction to simple, simply-connected group over a smooth base]
By \cite{CF23}*{Proposition~5.1}, we may first replace $G$ by $(G^{\mathrm{der}})^{\mathrm{sc}}$ to assume that $G$ is simply connected.  By the canonical decomposition \cite{SGA3IIInew}*{Expos\'e~XXIV, Sections~5.2--5.3, Proposition~5.10~(i)}, we can write 
\[
G=\mathrm{Res}_{R'/R}G'
\]
for a finite \'etale $R$-algebra $R'$ and a simple simply-connected $R'$-group $G'$. By the Shapiro bijection \cite{SGA3IIInew}*{Expos\'e~XXIV, Proposition~8.4}, we may then replace $R$ and $G$ by $R'$ and $G'$,
respectively, to assume that the $G$ is simple simply-connected. We decompose $\OO$ and $R$ into factors to
assume that both are domains. Localizing $\OO$ at the
nonzero contractions of the maximal ideals of $R$, or
replacing it by $\Frac\OO$ if there are none, we may
also assume that $\OO$ is a field or a semilocal Dedekind
domain with a nonempty closed fibre in $\Spec R$.

Combining Popescu's theorem \SP{07GC} with a limit argument, we reduce to the case when $R$ is the semilocal ring of a
smooth affine integral $\OO$-scheme $X$. By spreading out, we can assume that $G$ and $E$ are defined over $X$,
with $G$ simple simply connected and $E$ generically
trivial. If $\dim R\leq1$, the open in
Proposition~\ref{prop:partial-family} is all of $\Pj^1_R$,
so Theorem~\ref{thm:sectionwise-constancy} proves the result.
Hence we may assume that $\dim R\geq2$ and that $X$
has pure relative dimension $r>0$ over $\OO$.

Since $\rad(G)=1$, \cite{Gil21}*{Theorem~1.1} gives a
closed immersion $G\hookrightarrow\GL_{n,R}$; the quotient $\GL_{n,R}/G$
is affine by \cite{Alp14}*{Corollary~9.7.7} and finitely
presented. Thus the linearity hypothesis of
Proposition~\ref{prop:graph} is satisfied.
\end{pp}

\begin{pp}[A modification near the vertical fibres]
Applying Proposition~\ref{prop:partial-family} to $E$, we obtain a $G$-torsor
$Q$ over an open $V\subset\Pj^1_R$ containing $0_R$, $\infty_R$,
and all points of codimension $\le 2$, together with isomorphisms
\[
 Q|_{0_R}\simeq E,\qquad Q|_{\infty_R}\simeq1.
\]
As observed after that proposition, the complement of $V$
is finite over $R$.

When $\dim\OO=1$, let $\mathfrak p_1,\ldots,\mathfrak p_s$ be
the generic points of the nonempty closed $\OO$-fibres of $\Spec R$. When $\OO$ is a field, choose any one
height-one prime of $R$. In either case, let $A$ be the
semilocalization at these primes. Every point of $\Pj^1_A$ has
codimension $\le 2$ in $\Pj^1_R$, so $\Pj^1_A\subset V$.

Choose $Y\subset\Gm_R$ as in Lemma~\ref{lem:carrier}, and write
$d=\rk_R(Y)$. Proposition~\ref{prop:power} and
Lemma~\ref{lem:spread-power} give a nonzero
$f\notin\bigcup_i\mathfrak p_i$ such that
$\Pj^1_{R_f}\subset\varphi_d^{-1}(V)$ and $\varphi_d^*Q$ is
trivial away from $Y_{R_f}$, with its prescribed value at the infinity section.
The finite locally free map $\varphi_d$ preserves the finiteness
and the codimension bound of the complement, and restricts to the
identity on both sections. Apply Proposition~\ref{prop:normalization}
to this pullback. After replacing $(V,Q)$ by the resulting pullback,
we have
\[
 Q|_{0_R}\simeq E,\qquad Q|_{\infty_R}\simeq1,
 \qquad Q|_{\Pj^1_{R_f}}\simeq1,
\]
and the finite complement has a closed image $Z\subset V(f)$
of codimension $\ge 2$ in $\Spec R$. If $f$ is a unit,
the result follows again from Theorem~\ref{thm:sectionwise-constancy}. Otherwise,
we use its divisor in the following presentation.
\end{pp}

\begin{pp}[Passage to a relative curve]
We spread out $V$, $Q$, and the three displayed
isomorphisms to $X$. After shrinking $X$ around
$\Spec R$, we may assume that the reduced complement
$B_X$ is finite over $X$, contains neither distinguished
section, and has image of codimension $\ge 2$ in $X$.
We give this image its reduced structure, so that $B_X$
factors through it.

We may arrange that $f$ is a global nonzerodivisor on $X$, whose
divisor $Y_{\mathrm b}$ contains this image and contains no
irreducible component of any $\OO$-fibre. For the closed fibres,
this follows from the choice of $f$ at their generic points, after
removing components disjoint from $\Spec R$. The generic fibre is
avoided because $X$ is integral and $f\ne0$. To obtain a
closed subscheme $Z_{\mathrm b}\subset Y_{\mathrm b}$ to which
Proposition~\ref{prop:geometric-presentation} applies, choose in
the ideal of the image in $R$ an element $g$
avoiding the finitely many height-one primes of $f$. The pair
$(f,g)$ is a regular sequence; spread it out and take
$Z_{\mathrm b}=V(f,g)$, shrinking $X$ once more if necessary.

Proposition~\ref{prop:geometric-presentation}, applied to
$Z_{\mathrm b}\subset Y_{\mathrm b}\subset X$, gives a smooth
relative curve
\[
 X\to S,\qquad \text{ with }\, 
 S\subset\A^{r-1}_{\OO} \, \text{ an affine open},
\]
after shrinking $X$, such that $Z_{\mathrm b}$ is $S$-finite
and $Y_{\mathrm b}$ is $S$-quasi-finite. Base changing along
$\Spec R\to S$, we obtain a smooth affine $R$-curve $C$ with its
diagonal section $s$, a reductive $C$-group $\mathscr G$ and a $\sG$-torsor $E_C$
over $C$, and isomorphisms
\[
 s^*\mathscr G\simeq G,\qquad s^*E_C\simeq E.
\]
We also get closed subschemes $Z_C\subset Y_C\subset C$, with
$Z_C$ finite and $Y_C$ quasi-finite over $R$, and a
$\mathscr G$-torsor $Q_C$ over an open $V_C\subset\Pj^1_C$
containing
\[
 \Pj^1_{C\setminus Z_C},\qquad 0_C,\qquad\infty_C,
\]
such that $Q_C|_{0_C} \simeq E_C$, $Q_C|_{\infty_C} \simeq 1$, and $Q_C|_{\Pj^1_{C\setminus Y_C}} \simeq 1$.

The complement of $V_C$ is $R$-finite. In fact, the finite
$X$-scheme $B_X$ factors through
$(Z_{\mathrm b})_{\mathrm{red}}$, so it is finite over this scheme
and hence over $S$. Its base change is $R$-finite and has support
equal to $\Pj^1_C\setminus V_C$.

The two group schemes $\mathscr G$ and $G_C$ have the
same root datum on geometric fibres and are identified along
$s$. Applying \cite{Li23}*{Proposition~7.5} over the semilocal
ring of $C$ at the closed points of $Z_C\cup s$, and
spreading out, we may replace $C$ by a finite \'etale cover
of an affine neighbourhood of $Z_C\cup s$ on which
$\mathscr G\simeq G_C$, compatibly with the chosen lift
of $s$ and its prescribed isomorphism. We include $s$ in $Z_C$ and $Y_C$. By Panin's
finite-field tricks (see \cite{Ces22a}*{Lemma~6.1}),
a further finite \'etale cover of an affine neighbourhood of
$Z_C$, to which $s$ lifts, ensures that, for every maximal
ideal $\fm$ of $R$ with $k_{\fm}$ finite and for every $e\geq1$, the
number of degree-$e$ closed points of $(Z_C)_{k_{\fm}}$ is
strictly smaller than that of $\A^1_{k_{\fm}}$. We keep the full
inverse images of $Z_C$ and $Y_C$.
Shrinking $C$ around $Z_C$, we may remove the finitely
many points of the closed $R$-fibres of $Y_C$ outside
$Z_C$. The resulting closed fibres of $Y_C$ admit
closed immersions into the affine line by
\cite{CF23}*{Lemma~2.6}.

Lemma~\ref{lem:curve-embedding} now gives an \'etale
morphism to an affine open $W\subset\A^1_R$, fitting into
the Cartesian square
\[
\begin{tikzcd}
 Y_C \arrow[r, hook] \arrow[d, "\sim"']
   & C \arrow[d, "f_C"] \\
 Y_W \arrow[r, hook] & W,
\end{tikzcd}
\]
where $Y_W$ is closed in $W$. The shrinking allowed
by that lemma retains all of $Z_C$: a nonempty closed
subset of a finite $R$-scheme meets a closed $R$-fibre.
In particular, it retains $s$.

At each change of curve we pull back $V_C$, $Q_C$,
and their specified restrictions and trivializations. The
complement remains finite over $R$, since the covers
are finite and every subsequent open contains $Z_C$.

Using Remark~\ref{rem:normalize-frame}, adjust the trivialization on
$\Pj^1_{C\setminus Y_C}$ to have the prescribed value at the infinity section.
Lemma~\ref{lem:whole-excision}, with $T=Y_C\cup s$, descends the
open $V_C$, the torsor $Q_C$, and both of its restrictions to
$W$. The descended restriction at the zero section still pulls back to $E$
along the image of $s$. Thus, writing $C=W\subset\A^1_R$, we
have the data of Proposition~\ref{prop:graph}.

Proposition~\ref{prop:graph} constructs a torsor over
$\Pj^1_R$ whose restrictions along $s$ and $\infty_R$ are $E$ and $1$, respectively. Sectionwise constancy
(Theorem~\ref{thm:sectionwise-constancy}) therefore gives
$E\simeq1$, proving Theorem~\ref{thm:main}.\qed
\end{pp}

\begin{proof}[Proof of \Cref{red-models}]
We recall Guo's argument \cite{Guo22}*{Section~6}.
Passing to connected components, we may assume that $R$ is a normal domain, and put $K=\Frac R$.
Since the type of a reductive group scheme is locally constant and $G_K\simeq G'_K$, the groups $G$ and $G'$ have the same type.
Thus $E:=\underline{\mathrm{Isom}}_R(G,G')$ is an $\underline{\mathrm{Aut}}_R(G)$-torsor; see \cite{SGA3IIInew}*{Expos\'e~XXII, 2.8; Expos\'e~XXIV, Corollaire~1.9}.
By \cite{SGA3IIInew}*{Expos\'e~XXIV, Th\'eor\`eme~1.3}, there is an exact sequence of \'etale sheaves
\[
 1\to G^{\mathrm{ad}}\to\underline{\mathrm{Aut}}_R(G)
 \to\underline{\mathrm{Out}}_R(G)\to1,
\]
where $\underline{\mathrm{Out}}_R(G)$ is \'etale locally constant.

The quotient $Q:=E/G^{\mathrm{ad}}$ is an \'etale locally constant $\underline{\mathrm{Out}}_R(G)$-torsor by \cite{SGA3IIInew}*{Expos\'e~XXIV, Corollaire~1.10}.
Since $R$ is normal, restriction induces a bijection $Q(R)\simeq Q(K)$, as follows by \'etale descent from the constant case.
Choose $e\in E(K)$ and extend its image in $Q(K)$ to $q\in Q(R)$.
Then $P:=E\times_{Q,q}\Spec R$ is a $G^{\mathrm{ad}}$-torsor whose generic fibre contains $e$.
By \Cref{thm:main}, $P$ is trivial. Hence $E(R)\ne\varnothing$, so $G\simeq G'$.
\end{proof}

\end{document}